\documentclass[11pt,a4paper]{amsart}
\usepackage[margin=28mm]{geometry}
\usepackage{amsmath,amssymb,amsthm}
\usepackage{color}
\usepackage[unicode,colorlinks=true,linkcolor=blue,citecolor=blue,urlcolor=blue]{hyperref}
\newtheorem{theorem}{Theorem}[section]
\newtheorem{lemma}[theorem]{Lemma}
\newtheorem{corollary}[theorem]{Corollary}
\newtheorem{proposition}[theorem]{Proposition}
\theoremstyle{remark}
\newtheorem{remark}[theorem]{Remark}
\newcommand{\KK}{\mathbb{K}}
\newcommand{\g}{\mathfrak{g}}
\newcommand{\h}{\mathfrak{h}}
\DeclareMathOperator{\Lie}{Lie}
\DeclareMathOperator{\Aut}{Aut}
\DeclareMathOperator{\End}{End}
\DeclareMathOperator{\Ad}{Ad}
\DeclareMathOperator{\ad}{ad}
\DeclareMathOperator{\GL}{GL}
\DeclareMathOperator{\dl}{dl}
\newcommand{\lalg}{\mathfrak l}
\theoremstyle{definition}
\newtheorem{definition}[theorem]{Definition}
\title[Solvability and nilpotence of ind-groups]{Solvability 
and nilpotence of ind-groups via tangent curves}
\author{Alexander Perepechko}
\address{HSE University, Faculty of Computer Science,
Pokrovsky blvd. 11, Moscow, 109028 Russia}
\email{a@perep.ru}
\thanks{The research of A.~Perepechko was supported by RSF grant
No.~25-11-00302.}
\author{Mikhail Zaidenberg}
\address{Institut Fourier, UMR 5582, Laboratoire de Math\'ematiques,
Universit\'e Grenoble Alpes, CS 40700, 38058 Grenoble cedex 9, France}
\email{mikhail.zaidenberg@gmail.com}
\subjclass[2020]{Primary 22E65; Secondary 14L40, 17B30, 17B66}
\keywords{ind-group, automorphism group of an affine variety,
Lie algebra, solvable group, nilpotent group, logarithmic derivative}
\date{}
\begin{document}
\begin{abstract}
Answering a question of Kraft and the second-named author~\cite{KZ},
we prove that a connected ind-group over an algebraically closed
field of characteristic zero is solvable, respectively nilpotent, if and
only if its Lie algebra is. Their derived lengths, respectively
nilpotency classes, coincide.
We also obtain bounds on the derived length and nilpotency class
for regular linear
representations of connected ind-groups and for subgroups generated by
connected algebraic subgroups. As an application, we give a new proof
of the geometric case of a theorem of Belabbas~\cite{Belabbas}:
a solvable Lie algebra
of vector fields on an affine variety generated by finitely many locally
finite Lie subalgebras is locally finite and finite dimensional.
\end{abstract}
\maketitle

\section{Introduction}

Throughout, $\KK$ is an algebraically closed field of characteristic zero.
An ind-group $G$ has an admissible filtration
\begin{equation}\label{eq:filtration}
 G_0\subseteq G_1\subseteq\cdots,\qquad G=\bigcup_{N\geq0}G_N,
\end{equation}
where every $G_N$ is an algebraic variety and every inclusion is a closed
immersion. 
An ind-variety $Y$ is called \emph{curve-connected} if, for any
$a,b\in Y$, there are an irreducible algebraic curve $C$
and a morphism $\alpha:C\to Y$ such that $a,b\in\alpha(C)$;
see \cite[Definition~1.6.1]{FK}.
For ind-groups, connectedness is equivalent to curve-connectedness
\cite[Proposition 2.2.1 and Remark 2.2.3]{FK}.

For a solvable group $H$
or Lie algebra $\lalg$, we write $\dl(H)$ or $\dl(\lalg)$ for its derived
length, the least $d\geq0$ for which the $d$th term of its derived series
is trivial.
For a nilpotent group $H$ or Lie algebra $\lalg$, we write $c(H)$ or
$c(\lalg)$ for its nilpotency class, the least $r\geq0$ for which the
$(r+1)$st term of its lower central series is trivial. 

\begin{theorem}\label{thm:main}
Let $G$ be a connected ind-group and $\g:=\Lie G$.
Then $G$ is solvable, respectively nilpotent, if and only if $\g$ is.
Moreover, $\dl(G)=\dl(\g)$ whenever either is solvable, and
$c(G)=c(\g)$ whenever either is nilpotent.
\end{theorem}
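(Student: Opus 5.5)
We argue by comparing the derived (resp.\ lower central) series of $G$ with that of $\g$, using tangent curves. For an ind-group $G$ and a curve $\alpha\colon C\to G$ through $e$ with $\alpha(c_0)=e$, the tangent vector $d\alpha_{c_0}(\partial)$ lies in $\g$. More generally, the logarithmic derivative $\dl$-style map $\delta\alpha(c)=d(L_{\alpha(c)^{-1}})\,\alpha'(c)$ lands in $\g$ for every $c$. We do not need a one-parameter subgroup here; such subgroups need not exist in ind-groups. The key construction is the commutator of curves. Given curves $\alpha,\beta$ through $e$ (take $C=\mathbb A^1$ after normalization, or a smooth pointed curve), the map $(s,t)\mapsto[\alpha(s),\beta(t)]$ has vanishing first-order terms. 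Its mixed second derivative at $(0,0)$ equals $[\dot\alpha(0),\dot\beta(0)]$, the usual computation in $\Aut$-type ind-groups valid via the adjoint representation $\Ad\colon G\to\Aut(\g)$. By the curve-connectedness result of \cite{FK}, a derived subgroup of a connected ind-group is again connected (generated by images of irreducible curves through $e$). We would then define inductively the closed connected ind-subgroups $G^{(k)}$ (closures of derived subgroups) and $\gamma_k G$.

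\textbf{Step 1 (the group is at least as long as the algebra): $\dl(\g)\le\dl(G)$ and $c(\g)\le c(G)$.} We show by induction that $\g^{(k)}\subseteq\Lie\overline{G^{(k)}}$, and similarly for the lower central series. The inductive step uses the commutator-of-curves computation. If $\xi,\eta\in\Lie \overline{G^{(k)}}$ are tangent to curves in $\overline{G^{(k)}}$, then $[\xi,\eta]$ is tangent to a curve in $\overline{G^{(k+1)}}$, namely $t\mapsto[\alpha(t),\beta(t)]$ reparametrized, or more precisely a curve extracted from the surface. Then $\Lie$ of a subgroup is spanned by tangent vectors of curves, which holds for ind-groups since $T_eG=\varinjlim T_eG_N$ and tangent directions of $G_N$ at $e$ are realized by curves after passing to the ind-subvariety. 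If $G^{(d)}=\{e\}$ then $\g^{(d)}=0$.

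\textbf{Step 2 (the converse): $\dl(G)\le\dl(\g)$, and likewise for $c$.} This is the main obstacle, since $\Lie H=0$ should force a connected $H$ to be trivial. That is true for connected ind-groups: a connected ind-group with zero Lie algebra is trivial, because a nonconstant curve through $e$ has a point of nonzero logarithmic derivative. Translating by $\alpha(c)^{-1}$ then gives a nonzero element of $\g$. So it suffices to prove the reverse inclusion $\Lie\overline{G^{(k)}}\subseteq\g^{(k)}$.

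We would attack this via logarithmic derivatives. For $g(s,t)=[\alpha(s),\beta(t)]$, the logarithmic derivative $\delta_s g$ is expressed through $\Ad$ applied to $\delta\alpha$. The expression is
\[\delta_s g=\Ad(\beta(t)\alpha(s))\bigl(\delta\alpha(s)\bigr)-\Ad(\beta(t))\bigl(\delta\alpha(s)\bigr),\]
up to conventions. The key identity needed is that $\Ad(h)\xi-\xi\in[\g,\g]$ whenever $h$ is connected to $e$ and $\xi\in\g$. More generally, we need $\Ad(h)$ to preserve each $\g^{(k)}$ and to act trivially on $\g^{(k)}/\g^{(k+1)}$ modulo suitable terms. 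This we prove by differentiating along a curve from $e$ to $h$: the derivative of $\Ad(\gamma(c))\xi$ equals $\Ad(\gamma(c))[\delta\gamma(c),\xi]$, and $\Ad(\gamma(c))$ preserves the ideal $\g^{(j)}$, since characteristic ideals are $\Ad$-stable by the same ODE-free argument on $G_N$. An algebraic curve whose derivative stays in a subspace $V$ stays in $x_0+V$, with no integration needed beyond noting $V$ is closed in each finite-dimensional piece.

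By induction on $k$, every curve in $G^{(k)}$ through $e$ then has logarithmic derivative in $\g^{(k)}$. Products and inverses are handled by the Leibniz rule $\delta(\alpha\beta)=\Ad(\beta^{-1})\delta\alpha+\delta\beta$. Hence $\Lie\overline{G^{(k)}}\subseteq\g^{(k)}$. The lower central series is treated identically, using commutators with all of $G$.
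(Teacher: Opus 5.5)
Your proposal has genuine gaps in both directions; the more serious one is in Step 2.

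\textbf{Step 2 ($\dl(G)\le\dl(\g)$, $c(G)\le c(\g)$).} The induction here does not close.

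\emph{The inductive claim cannot be established this way.} You claim that every curve whose image lies in the abstract subgroup $G^{(k)}$ has logarithmic derivative in $\g^{(k)}$. The Leibniz rule only controls curves explicitly built as products, inverses and commutators of curves already known to be tangent to $\g^{(k)}$. An arbitrary curve lying pointwise in $G^{(k+1)}$ has no reason to factor algebraically in that way.

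\emph{Generators versus the generated group.} Even for constructed curves, you only treat $[\alpha(s),\beta(t)]$ with both entries on single curves. But $G^{(k+1)}$ is generated by commutators of arbitrary elements of $G^{(k)}$, which are products of points on many curves, and you give no mechanism for passing from generators to the whole group.

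\emph{The conclusion is unjustified and too strong.} The final step ``hence $\Lie\overline{G^{(k)}}\subseteq\g^{(k)}$'' does not follow. Velocities of curves lying in $G^{(k)}$ do not bound the Zariski tangent space of the closure; a cusp already shows this. Moreover, together with Step 1 you would be proving $\Lie\overline{G^{(k)}}=\g^{(k)}$ for every connected ind-group. The paper neither needs nor proves this, and bounding the Lie algebra of a closure from above is exactly the kind of identification it avoids (compare the question in Remark~\ref{rem:linear}).

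\emph{The missing idea.} Work with the subgroup $G(\mathfrak a)$ generated by images of curves through $e$ whose right logarithmic derivative takes values in an $\Ad(G)$-stable ideal $\mathfrak a$. Then prove $[G(\mathfrak a),G(\mathfrak b)]\subseteq G([\mathfrak a,\mathfrak b])$ (Lemma~\ref{lem:commutator}) in two steps.
First, the identity $d(\Ad(\alpha)x)=[\delta^r\alpha,\Ad(\alpha)x]$, applied along generating curves of $G(\mathfrak b)$, shows that every $g\in G(\mathfrak b)$ acts trivially on $\mathfrak a/[\mathfrak a,\mathfrak b]$. This is your \emph{key identity}, but it must be stated for $g\in G(\mathfrak b)$ acting on $\mathfrak a$, not for arbitrary $h$ joined to $e$. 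It passes from curve points to the whole group because it is multiplicative.
Second, commute a generating curve $\gamma$ of $G(\mathfrak a)$ with a \emph{fixed} $g\in G(\mathfrak b)$. Then $\delta^r[\gamma,g]=\Ad(\gamma)(1-\Ad(g))\Ad(\gamma)^{-1}\delta^r\gamma$ takes values in $[\mathfrak a,\mathfrak b]$, so $[\gamma(s),g]\in G([\mathfrak a,\mathfrak b])$. Normality of $G([\mathfrak a,\mathfrak b])$ then handles arbitrary elements of $G(\mathfrak a)$ group-theoretically.
Induction gives $G^{(k)}\subseteq G(\g^{(k)})$, and similarly for the lower central series. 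Finally $G(0)=\{e\}$ by Lemma~\ref{lem:constant}, and no Lie algebra of a closure is ever computed.

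\textbf{Step 1 ($\dl(\g)\le\dl(G)$, $c(\g)\le c(G)$).} The target $\g^{(k)}\subseteq\Lie\overline{G^{(k)}}$ is right; it is the paper's reverse bound. The curve-based justification, however, fails.

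\emph{Curve velocities need not span.} Filtration pieces $G_N$ may be singular at $e$, and Zariski tangent vectors at a singular point need not be spanned by velocities of curves (again, the cusp). So your claim that Lie algebras of subgroups are spanned by curve tangents is not available.

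\emph{No curve with velocity $[\xi,\eta]$.} You cannot extract such a curve from $(s,t)\mapsto[\alpha(s),\beta(t)]$. The diagonal curve has zero velocity at $e$, and $t\mapsto[\alpha(\sqrt t),\beta(\sqrt t)]$ is not algebraic.

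\emph{The fix.} Use the two-step first-order argument of \cite[Lemma~5.1.4]{KZ}, which uses only differentials of morphisms. For fixed $b\in B$, the map $a\mapsto[a,b]$ has differential $1-\Ad(b)$ at $e$, so $(1-\Ad(b))\Lie A\subseteq\Lie\overline{[A,B]}$. Then, for fixed $x\in\Lie A$, the map $b\mapsto(1-\Ad(b))x$ is a morphism into this linear subspace. Differentiating it at $e$ with $d\Ad_e=\ad$ gives $[\Lie B,x]\subseteq\Lie\overline{[A,B]}$. Without affineness this needs Lemmas~\ref{lem:adjoint-regular} and~\ref{lem:adjoint-differential}.
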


\begin{corollary}\label{cor:linear}
Let $G$ be a connected ind-group, and let
$\rho:G\to\operatorname{GL}(V)$ be a regular linear representation on a
countable-dimensional vector space $V$.
Let $\h:=d\rho_e(\Lie G)$. If $\h$ is solvable (respectively, nilpotent),
then so is $\rho(G)$, with
$\dl(\rho(G))\leq\dl(\h)$ (respectively, $c(\rho(G))\leq c(\h)$).
\end{corollary}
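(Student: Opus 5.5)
The plan is to reduce the corollary to Theorem~\ref{thm:main}, applied to a suitable connected ind-group attached to $\rho$. The obvious candidate is the image $\rho(G)\subseteq\GL(V)$. In the ind-group setting, however, the image of a homomorphism need not be closed, so I will not try to give $\rho(G)$ an ind-group structure directly. Instead I will bound its derived and lower central series using the closure $H:=\overline{\rho(G)}$ in the ind-group $\GL(V)$. Here $V$ is countable-dimensional, so $\GL(V)$ is an ind-group, filtered by determinant-bounded, degree-bounded pieces. The argument then runs in three steps.

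\emph{Step 1: $H$ is a connected ind-group and contains $\rho(G)$ as a dense subgroup.}
The closure of a subgroup in an ind-group is a closed subgroup, hence itself an ind-group. Since $G$ is connected, it is curve-connected. Any two points of $\rho(G)$ therefore lie on an irreducible curve of the form $\rho\circ\alpha$. From this I expect to conclude that $H$ is connected; I will quote the standard result from \cite{FK} that the closure of the image of a connected ind-group is connected. I will then replace $H$ by the smallest closed subgroup containing $\rho(G)$.

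\emph{Step 2: $\Lie H$ versus $\h$.}
This is the main obstacle. For algebraic groups one has $\Lie\overline{\rho(G)}=d\rho_e(\Lie G)$. For ind-groups this can fail: the Lie algebra of the closure may be strictly larger than $\h$, since one only has $\h\subseteq\Lie H$. So I will not compute $\Lie H$. Instead I will use the mechanism behind Theorem~\ref{thm:main}, namely tangent curves and logarithmic derivatives. The plan is to show directly that the derived series $\rho(G)^{(k)}$ is controlled by $\h^{(k)}$, and similarly for the lower central series. The approach is as follows.
\begin{itemize}
\item For curves $\gamma$ in $G$, the commutator curves $[\rho\gamma_1,\rho\gamma_2]$ have tangent vectors lying in $[\h,\h]$.
\item Iterating this, every curve built from $k$-fold commutators of curves through $e$ in $\rho(G)$ has all its logarithmic derivatives in $\h^{(k)}$.
\item If $\h^{(d)}=0$, then every such curve has vanishing logarithmic derivative, and is therefore constant.
\end{itemize}
This is exactly the core step of the proof of Theorem~\ref{thm:main}. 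The point is that it only uses the group $G$ together with the homomorphism $\rho$, so it can be run for $\rho$ in place of the identity map of $G$. For the bookkeeping I will work with the curve-generated subgroups $\rho(G)^{(k)}$: the $k$th derived subgroup of $\rho(G)$ is generated by images of such commutator curves, because $G^{(k)}$ is curve-generated by the theorem's argument.

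\emph{Step 3: conclusion.}
If $\h^{(d)}=0$, then $\rho(G)^{(d)}$ is generated by constant curves through $e$, so $\rho(G)^{(d)}=\{e\}$ and $\dl(\rho(G))\le\dl(\h)$. The nilpotent case is identical, using the lower central series $\h^{k}$ and $\rho(G)_{k}$. The bound is an inequality rather than an equality because $\ker\rho$ and the failure of injectivity of $d\rho_e$ interact differently.

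There is a cleaner alternative if the theorem's proof supplies the fact that $G^{(k)}$ is generated by curves whose logarithmic derivatives lie in $\g^{(k)}$. In that case one applies $\rho$, uses the identity $d\rho_e\circ\delta\gamma=\delta(\rho\gamma)$, and obtains the result immediately. I expect the delicate point to be justifying that a curve in $\GL(V)$ through $e$ with identically zero logarithmic derivative is constant. This holds because $\GL(V)$ acts faithfully on $V$: the orbit maps then satisfy a linear ODE with zero coefficients in each finite-dimensional piece.
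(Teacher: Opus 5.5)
Your ``cleaner alternative'' is correct and is essentially the paper's proof, which applies Proposition~\ref{prop:representation} with $H=G$ and $\lalg=\g$. Corollary~\ref{cor:transfer} gives $G^{(d)}\subseteq G(\g^{(d)})$ and $G_{c+1}\subseteq G(\g_{c+1})$. Containment is all you need here, not that $G^{(k)}$ is generated by such curves. Since $d\rho_e\colon\g\to\h$ is a surjective Lie homomorphism, $\h^{(d)}=0$ (resp.\ $\h_{c+1}=0$) places these ideals inside $\mathfrak n:=\ker d\rho_e$. For a curve $\alpha$ tangent to $\mathfrak n$, the orbit curve satisfies $d(\rho(\alpha)v)=d\rho_e(\delta^r\alpha)\,\rho(\alpha)v=0$, so it is constant by Lemma~\ref{lem:constant}. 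This is your ``linear ODE with zero coefficients'', and it uses only the regular action on $V$. Hence $\rho(G)^{(d)}=\rho(G^{(d)})=\{e\}$. The paper runs the induction with the larger ideals $f^{-1}(\h^{(r)})$, $f=d\rho_e$, but $\g^{(r)}$ works equally well.

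Your main line, however, does not work as written, and repairing it amounts to reproving Lemma~\ref{lem:commutator}.

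Step~1 is false. For infinite-dimensional $V$ there is no determinant, $\End(V)$ has uncountable dimension, and $\GL(V)$ carries no natural ind-group structure. This is why the paper treats $\rho(G)$ purely as an abstract group (Remark~\ref{rem:linear}). The closure $H$ is never used afterwards anyway.

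In Step~2, the claim that commutator curves are tangent to $[\h,\h]$, and its iteration, is not an infinitesimal statement. Take curves $\gamma_1,\gamma_2$ tangent to an $\Ad(G)$-stable ideal $\mathfrak a$, and let $c:=[\gamma_1,\gamma_2]$ be their pointwise commutator. Then
$\delta^r c=(1-\Ad(\gamma_1\gamma_2\gamma_1^{-1}))\delta^r\gamma_1+(1-\Ad(c\gamma_1^{-1}))\Ad(\gamma_1)\delta^r\gamma_2$.
Away from the marked point, this lies in $[\mathfrak a,\mathfrak a]$ only because $(\Ad(g)-1)\mathfrak a\subseteq[\mathfrak a,\mathfrak a]$ for every $g$ in the group generated by curves tangent to $\mathfrak a$. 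That global fact comes from integrating $d(\Ad(\alpha)x)=[\delta^r\alpha,\Ad(\alpha)x]$ along tangent curves; it is the first half of Lemma~\ref{lem:commutator}. Your proposal never states it. It must be carried out in $G$ with $\Ad(G)$-stable ideals, or with $\End(V)$-valued forms of orbit maps, not via an ind-structure on $\GL(V)$.
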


Corollary~\ref{cor:linear} is not an immediate consequence of
Theorem~\ref{thm:main}; see Remark~\ref{rem:linear}.

Theorem~\ref{thm:main} answers Question~5 in
\cite[Section~5.1]{KZ} affirmatively for connected ind-groups,
with equality of derived lengths as well as nilpotency classes.
In particular, if $X$ is a variety and $G\subseteq\Aut(X)$ a subgroup 
equipped with the
structure of a connected ind-group, then Theorem \ref{thm:main} applies.
 If $G$ is a connected ind-group and $\rho: G \to \Aut(X)$ 
 comes from a regular $G$-action on $X$, 
 then Corollary \ref{cor:linear} applies.
The connectedness assumption is essential: the finite subgroup
$A_5\subset\mathrm{GL}_5(\KK)\subset\Aut(\mathbb A^5)$, acting by
permutation matrices, has zero Lie algebra and is not solvable.
For commutativity, the equivalence was already known
\cite[Corollary~7.5.3]{FK}; see also \cite[Remark~5.1.5]{KZ}.

A further consequence concerns a subgroup 
$H$ generated by a family of
connected algebraic subgroups of an ind-group.

\begin{corollary}\label{cor:algebraic}
Let $G$ be an ind-group and let
$\{H_i\}_{i\in I}$ be connected algebraic subgroups of $G$. Set
\[
 H:=\langle H_i\mid i\in I\rangle,\qquad
 \h:=\langle\Lie H_i\mid i\in I\rangle_{\mathrm{Lie}}.
\]
Then $H$ is solvable (respectively, nilpotent) if and only if $\h$ is.
Moreover, $\dl(H)=\dl(\h)$ whenever either is solvable, and
$c(H)=c(\h)$ whenever either is nilpotent.
\end{corollary}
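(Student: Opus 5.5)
The plan is to reduce Corollary~\ref{cor:algebraic} to Theorem~\ref{thm:main}. To do so, we realize $H$ as a connected ind-group whose Lie algebra is $\h$, or at least is sandwiched appropriately.

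\textbf{Step 1: $H$ as an increasing union.} For each finite sequence $\underline i=(i_1,\dots,i_m)$ of indices, consider the multiplication morphism
\[
\mu_{\underline i}:H_{i_1}\times\cdots\times H_{i_m}\to G .
\]
Since each $H_{i_k}$ is an irreducible algebraic group, the image $H_{\underline i}:=\mu_{\underline i}(H_{i_1}\times\cdots\times H_{i_m})$ is an irreducible constructible subset contained in some $G_N$, and it contains $e$. Then $H=\bigcup_{\underline i}H_{\underline i}$, and every element of $H$ lies on the image of an irreducible curve through $e$. Thus $H$ is curve-connected in the sense of \cite[Definition~1.6.1]{FK}.

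\textbf{Step 2: passing to the closure.} Let $\bar H$ be the closure of $H$ in $G$, a closed ind-subgroup. By the curve-connectedness of Step~1, $\bar H$ is connected. I would use the standard fact from \cite{FK} (Kambayashi/Shafarevich-type results on subgroups generated by connected algebraic subgroups) that $\Lie\bar H$ is the closure of the Lie subalgebra generated by the $\Lie H_i$. Concretely, $\Lie \bar H$ is the closure of $\h$ in the natural topology of $\Lie G=\varprojlim$. Moreover, $\Lie\bar H$ is spanned by tangent vectors at $e$ of curves in the $H_{\underline i}$. The derivatives of $\mu_{\underline i}$ at points $(h_1,\dots,h_m)$ produce iterated $\Ad$-images $\Ad(h_1\cdots h_{k-1})\Lie H_{i_k}$. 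Because $\Ad(\exp)$ of elements of $\Lie H_i$ acts as the exponential of $\ad$ on the locally finite pieces, these lie in $\h$ or in its closure.

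\textbf{Step 3: applying Theorem~\ref{thm:main}.} Solvability and nilpotence of a Lie algebra pass to its closure with the same derived length and class, because the brackets are continuous on $\Lie\bar H$. They likewise pass from the closure back to the dense subalgebra $\h$. Similarly, $H$ and its closure $\bar H$ have the same derived length and nilpotency class, since the commutator maps are morphisms and $\bar H^{(k)}\subseteq\overline{H^{(k)}}$. Combining these two facts with Theorem~\ref{thm:main} applied to $\bar H$ gives
\[
\dl(H)=\dl(\bar H)=\dl(\Lie\bar H)=\dl(\h),
\]
and likewise for nilpotency class. The same chain gives the equivalence of solvability, respectively nilpotence, for $H$ and $\h$.

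\textbf{Main obstacle.} The hard point is Step~2, the identification of $\Lie\bar H$ with the closure of $\h$. In particular, one must show that $\Ad(h)\Lie H_j\subseteq\overline{\h}$ for $h\in H_i$. If the general identification fails, I would try instead to prove each inequality directly using the tangent-curve technique of Theorem~\ref{thm:main}. That technique works only with curves inside $H$ through $e$ built from the $H_i$, whose logarithmic derivatives lie in $\overline{\h}$, and this would suffice for the desired bounds.
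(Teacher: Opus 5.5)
\textbf{Gap: the identification of $\Lie\bar H$ in Step 2.} Your main route needs $\Lie\bar H=\overline{\h}$, and this is not a standard fact from \cite{FK}. The inclusion $\h\subseteq\Lie\bar H$ is automatic. The reverse inclusion is exactly the kind of statement the paper warns cannot be extracted from connectedness of a closure (Remark~\ref{rem:linear}, cf.\ \cite[Section~1.11]{KZ}), and the paper's proof is built to avoid it.

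Two further points make the claim weaker than it looks. First, $\Lie G=\varinjlim_N T_eG_N$ is a direct limit, not an inverse limit. In its ind-topology every linear subspace is closed, so your claim really asserts $\Lie\bar H=\h$. Second, your justification does not reach this. Showing $\Ad(h)\Lie H_j\subseteq\h$ only shows that the images of the differentials of the maps $\mu_{\underline i}$ lie in $\h$. But $\Lie\bar H=\varinjlim T_e(\bar H\cap G_N)$ consists of Zariski tangent vectors of the closed pieces of the closure, and these need not be velocities of curves in $H$. Your assertion that $\Lie\bar H$ is spanned by tangent vectors of curves in the $H_{\underline i}$ is also unproved.

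\textbf{What Step 3 actually gives.} Using only $\h\subseteq\Lie\bar H$, you get $\dl(\h)\le\dl(\Lie\bar H)=\dl(\bar H)=\dl(H)$, and likewise $c(\h)\le c(H)$. This is precisely how the paper obtains the reverse bounds. The forward half is not established: $\h$ solvable or nilpotent does not yet give $\dl(H)\le\dl(\h)$ or $c(H)\le c(\h)$, because $\Lie\bar H$ could a priori be larger than $\h$ with larger derived length or class.

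\textbf{How to close the gap.} Your fallback is the right idea and is what the paper does. It must be run with $\lalg=\h$ itself, never with $\Lie\bar H$ or a closure: apply Corollary~\ref{cor:transfer} to the abstract subgroup $H$ with $\lalg=\h$. Two inputs are needed.
\begin{enumerate}
\item $\h$ is $\Ad(H)$-stable. It is stable under $\ad(\Lie H_i)=d(\Ad|_{H_i})_e(\Lie H_i)$. Since $\Ad|_{H_i}$ is a regular representation of the connected group $H_i$, $\h$ is $\Ad(H_i)$-stable by \cite[Corollary~7.2.7]{FK}. Your exponential heuristic does not work as stated, since a torus has no algebraic exponential over a general $\KK$.
\item $H(\h)=H$. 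Each element of $H_i$ is joined to $e$ by a smooth curve inside $H_i$, whose right logarithmic derivative lies in $\Lie H_i\subseteq\h$. No product curves are needed.
\end{enumerate}
Lemma~\ref{lem:commutator} then gives $H^{(r)}\subseteq H(\h^{(r)})$ and $H_r\subseteq H(\h_r)$. Hence $\dl(H)\le\dl(\h)$ and $c(H)\le c(\h)$, which combined with the half you already have yields both equivalences and the equalities.
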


Corollary~\ref{cor:algebraic} answers affirmatively the solvability question
raised in \cite[Remark~2.10]{ZLF}.
When the family is finite and the ambient group is
$\Aut(X)$ for a  quasi-affine variety $X$, solvability makes $H$ algebraic by
\cite[Theorem~A]{CKRVS}, with $\h=\Lie H$ by \cite[Theorem~A]{KZ} 
in the affine case. 

A Lie algebra $\lalg$ of derivations of a $\KK$-algebra $A$ is called
\emph{locally finite} if $A$ is a union of finite-dimensional
$\lalg$-invariant subspaces.
The following corollary gives an alternative proof of the geometric case of
Theorem~D in \cite{Belabbas}, which answers Question~1 in \cite{AZ25}
affirmatively.

\begin{corollary}\label{cor:application-locally-finite}
Let $X$ be an affine variety and let
$\mathfrak h\subseteq\Lie\Aut(X)$ be a solvable Lie algebra generated by
locally finite Lie subalgebras $\h_i$, $i=1,\ldots,n$.  
Then $\h$ is locally
finite, and in particular finite dimensional. If $\h_i=\Lie H_i$, 
where the $H_i\subset\Aut(X)$ are connected 
algebraic subgroups, then 
$\h=\Lie H$, where $H=\langle H_i\mid i=1,\ldots,n\rangle$.
\end{corollary}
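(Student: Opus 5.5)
Let $H:=\langle H_1,\dots,H_n\rangle\subseteq\Aut(X)$ in the second assertion, and in the first pass to groups as follows. The plan is to reduce the first assertion to the second, and the second to Corollary~\ref{cor:algebraic} combined with \cite[Theorem~A]{CKRVS} and \cite[Theorem~A]{KZ}.

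\emph{Step 1: from locally finite algebras to algebraic groups.} A locally finite Lie subalgebra $\h_i\subseteq\Lie\Aut(X)$ is finite dimensional, since it acts faithfully on a finite-dimensional invariant subspace containing generators of $\KK[X]$. Its action on $\KK[X]$ is locally finite, so it integrates: by the standard theory (e.g.\ \cite{KZ}, or the Jordan decomposition of locally finite derivations) the smallest algebraic Lie subalgebra containing $\h_i$ is $\Lie H_i'$ for a connected algebraic subgroup $H_i'\subseteq\Aut(X)$. Concretely, take the Zariski closure in $\GL(W)$, $W$ a finite-dimensional invariant generating subspace, of the group generated by exponentials of the locally nilpotent parts and the tori integrating the semisimple parts. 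We have $\h_i\subseteq\Lie H_i'$. The algebraic hull of a solvable algebra stays solvable of the same derived length. Moreover, $\h':=\langle \Lie H_i'\rangle_{\mathrm{Lie}}$ should remain solvable whenever $\h$ is. I would argue this by noting that $\Lie H_i'$ is contained in the span of iterated commutators of semisimple and nilpotent parts of elements of $\h_i$, and that these parts can be obtained from $\h$ by polynomial operations preserving derived identities. \textbf{This is the step I expect to be the main obstacle.} What I would try first is to show $[\Lie H_i',\Lie H_j']\subseteq$ the algebraic hull of the derived terms, working inside a common finite-dimensional $\GL(W)$. If $\h$ is only known to be solvable, however, $W$ must be chosen invariant under all $\h_i$ simultaneously, which is itself part of what we want to prove. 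If needed, I would instead restrict the first assertion to the case $\h_i=\Lie H_i$ and derive the general case by replacing each $\h_i$ by $\Lie H_i'$ only after establishing solvability of $\h'$.

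\emph{Step 2: apply the group results.} With $\h=\langle\Lie H_i\rangle_{\mathrm{Lie}}$ solvable, Corollary~\ref{cor:algebraic} gives that $H$ is solvable with $\dl(H)=\dl(\h)$. By \cite[Theorem~A]{CKRVS}, a solvable subgroup of $\Aut(X)$ generated by finitely many connected algebraic subgroups is itself a connected algebraic subgroup. Then \cite[Theorem~A]{KZ} gives $\Lie H=\langle\Lie H_i\rangle_{\mathrm{Lie}}=\h$, which is the second assertion.

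\emph{Step 3: local finiteness.} Since $H$ is an algebraic group acting regularly on $X$, its action on $\KK[X]$ is locally finite: every $f$ lies in a finite-dimensional $H$-invariant subspace $U$. The differential of this action makes $U$ invariant under $\Lie H$. Hence $\Lie H$ is locally finite and, as $H$ is algebraic, finite dimensional. Any subalgebra, in particular $\h\subseteq\Lie H'$ in the general case, is then also locally finite and finite dimensional.
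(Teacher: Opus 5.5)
\textbf{Verdict: there is a genuine gap, in the general case where the $\h_i$ are only locally finite.} Your Steps 2 and 3 are essentially the paper's argument for the case $\h_i=\Lie H_i$. Corollary~\ref{cor:algebraic} makes $H$ solvable, \cite[Theorem~A]{CKRVS} makes $H$ algebraic, \cite{KZ} gives $\h=\Lie H$, and local finiteness follows from the rational action of $H$ on $\mathcal O(X)$.

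To reduce the general case to the algebraic one, you need a Lie algebra that contains $\h$, is generated by Lie algebras of finitely many connected algebraic subgroups, \emph{and is solvable}. You never prove that $\h'=\langle\Lie H_i'\rangle_{\mathrm{Lie}}$ is solvable, as you acknowledge, and your fallback presupposes exactly this. The common-$\GL(W)$ idea is circular, as you note.

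The mechanism you suggest is also false as stated: $\Lie H_i'$ need not lie in the span of iterated brackets of Jordan components of elements of $\h_i$, because the algebraic hull contains replicas. For example, $x=u\partial_u+\sqrt2\,v\partial_v$ on $\mathbb A^2$ is semisimple, so those brackets span only $\KK x$. Its algebraic hull, however, is the two-dimensional torus algebra spanned by $u\partial_u$ and $v\partial_v$.

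\textbf{How the paper closes it.} It cites \cite[Corollary~2.13]{ZLF}: $\h$ lies in a solvable Lie algebra $\h_{\max}$ generated by Lie algebras of algebraic subgroups. Since each $\h_i$ is finite dimensional, finitely many connected algebraic subgroups $K_1,\dots,K_m$ suffice to get
$\h\subseteq\langle\Lie K_1,\dots,\Lie K_m\rangle_{\mathrm{Lie}}\subseteq\h_{\max}$. The algebraic case then applies to this solvable algebra.

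\textbf{How your route could be completed.} You can avoid a common $W$ with a Chevalley-type argument done separately for each $i$:
\begin{enumerate}
\item $H_i'$ acts rationally on $\Lie\Aut(X)$, with differential $\ad$.
\item Let $\mathfrak m\subseteq\mathfrak n$ be subspaces with $[\h_i,\mathfrak n]\subseteq\mathfrak m$. Working in finite-dimensional $H_i'$-stable subspaces, the closed subgroup of $H_i'$ that stabilizes $\mathfrak m$ and acts trivially on $\mathfrak n/\mathfrak m$ has Lie algebra containing $\h_i$. Minimality of $H_i'$ then forces $[\Lie H_i',\mathfrak n]\subseteq\mathfrak m$.
\item Take $\mathfrak m=[\h,\h]$ and $\mathfrak n=\h$. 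This gives $[\Lie H_i',\h]\subseteq[\h,\h]$ for all $i$.
\item Now take $\mathfrak n=\h+\Lie H_j'$; the hypothesis holds by the previous step applied to $j$. This gives $[\Lie H_i',\Lie H_j']\subseteq[\h,\h]$.
\item Hence $\sum_i\Lie H_i'+[\h,\h]$ is a subalgebra whose derived algebra lies in $[\h,\h]$. So $[\h',\h']\subseteq[\h,\h]$, and $\h'$ is solvable.
\end{enumerate}
Without one of these two arguments, your proof establishes the first assertion only when the $\h_i$ are Lie algebras of algebraic subgroups.
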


The proof of Theorem~\ref{thm:main} uses logarithmic derivatives
along curves in $G$ through the identity.
This method was initiated in \cite[Section~3]{P}, where formal
expansions were used. Here we develop this approach by following
curves tangent to Lie ideals. To each $\Ad(G)$-stable ideal
$\mathfrak a\subseteq\mathfrak g$, we associate the normal subgroup
$G(\mathfrak a)\triangleleft G$ generated by the images of morphisms
from smooth irreducible curves through the identity whose logarithmic
derivatives take values in $\mathfrak a$.
Connectedness gives $G(\mathfrak g)=G$, while $G(0)=\{e\}$.
The key commutator lemma (Lemma~\ref{lem:commutator}) states that
\begin{equation}\label{eq:comm}
 [G(\mathfrak a),G(\mathfrak b)]
 \subseteq G([\mathfrak a,\mathfrak b])
\end{equation}
for any two $\Ad(G)$-stable ideals $\mathfrak a,\mathfrak b
\subseteq\mathfrak g$.
Induction then transfers the derived-length and nilpotency-class
bounds from $\mathfrak g$ to $G$. For the reverse bounds,
we adapt the proof of \cite[Lemma~5.1.4]{KZ} from the affine case
to our more general setting.

For a regular representation, curves tangent to the kernel of its
differential act trivially, giving Corollary~\ref{cor:linear}.
We derive the differentiation formulas in Section~\ref{sec:preliminaries}
and prove the commutator inclusion \eqref{eq:comm} 
in Section~\ref{sec:commutator}.
The proofs of the main results and the applications follow in
Sections~\ref{sec:main-proofs} and~\ref{sec:algebraically-generated}.

\subsection*{Declaration of AI assistance}
The authors used OpenAI models to draft proofs and improve the language
and exposition. The authors are responsible for the manuscript's
correctness and originality.

\section{Preliminaries}\label{sec:preliminaries}

\subsection{Ind-groups and their representations}

For the definitions of ind-varieties, ind-morphisms, and ind-groups,
we follow \cite[Sections~1.1 and~2.1]{FK}.

Let $G:=\bigcup_NG_N$ be an ind-group, with $e$ in
every filtration element. Its Lie algebra is the countable-dimensional
space $\Lie G=\varinjlim_N T_eG_N$, with its usual
Lie bracket. We use the following facts.
\begin{enumerate}
\item A connected ind-group is curve-connected
\cite[Proposition~2.2.1(2)]{FK}.
Normalizing a connecting curve gives a morphism from a smooth
irreducible curve joining any prescribed two points.
\item Conjugation induces a regular linear action of $G$ on
$\Lie G$, and
\[
 d\Ad_e(x)(y)=\ad(x)(y)=[x,y]\qquad(x,y\in\Lie G).
\]
For Lie groups, the identity $d\Ad_e=\ad$ is proved in
\cite[Chapter~I, \S2, Theorem~3]{OV}.
For affine ind-groups, regularity and this identity are proved in
\cite[Theorem~7.4.4 and Proposition~7.5.2]{FK}.
The same statements hold without affineness by
Lemmas~\ref{lem:adjoint-regular} and~\ref{lem:adjoint-differential} below.
\item Every regular linear representation $\rho:G\to\GL(V)$
on a countable-dimensional vector space $V$ induces a natural Lie algebra
homomorphism $d\rho_e:\Lie G\to\End_\KK(V)$
\cite[Definition~2.6.1 and Lemma~2.6.2]{FK};
for the classical construction, see \cite[Chapter~I, \S2, subsection~2]{OV}.
Here regularity means that the action map
$G\times V\to V$, $(g,v)\mapsto\rho(g)v$, is an ind-morphism,
where $V$ has the ind-variety structure given by its finite-dimensional
subspaces.
For $x\in\Lie G=T_eG$ and $v\in V$, the orbit map
$\mu_v:G\to V$, $g\mapsto\rho(g)v$, gives
\[
 (d\rho_e(x))(v)=(d\mu_v)_e(x).
\]
Every orbit map restricted to an algebraic curve takes values in a
finite-dimensional subspace of $V$.
\end{enumerate}

\begin{lemma}\label{lem:adjoint-regular}
For any ind-group $G$, conjugation induces a regular linear action
of $G$ on $\Lie G$.
\end{lemma}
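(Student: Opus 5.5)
We must show that the map $G\times\Lie G\to\Lie G$, $(g,x)\mapsto\Ad(g)x=(dc_g)_e(x)$, is an ind-morphism, where $c_g(h)=ghg^{-1}$ and $\Lie G=\varinjlim_N T_eG_N$ carries the ind-structure given by its finite-dimensional subspaces. Linearity of each $\Ad(g)$ and the homomorphism property $\Ad(gh)=\Ad(g)\Ad(h)$ follow from the chain rule applied to $c_{gh}=c_g\circ c_h$. The content of the lemma is regularity, and the plan is to reduce it to a family-version of the differential of a morphism between algebraic varieties.

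\textbf{Step 1 (bounding degrees).} Consider the conjugation ind-morphism $\gamma:G\times G\to G$, $(g,h)\mapsto ghg^{-1}$. Fix $M$ and $N$. Since $G_M\times G_N$ is an algebraic variety, its image under $\gamma$ lies in some $G_K$, and $\gamma$ restricts to a morphism of varieties $\gamma_{M,N}:G_M\times G_N\to G_K$. This is the standard property of ind-morphisms from \cite[Section~1.1]{FK}. Moreover $\gamma(g,e)=e$ for all $g$.

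\textbf{Step 2 (relative tangent map).} The subvariety $G_M\times\{e\}$ is mapped to the point $e$. Differentiating $\gamma_{M,N}$ in the second variable along this section gives a morphism of vector bundles
\[
G_M\times T_eG_N\to T_eG_K,\qquad (g,x)\mapsto d(\gamma_{M,N})_{(g,e)}(0,x).
\]
To see that this is a morphism, I will work locally. Take an affine open $U\ni e$ of $G_K$, an affine open of $G_M\times G_N$ containing $\{g\}\times\{e\}$ mapping into $U$, and the comorphism on coordinate functions. In these charts the map $(g,x)\mapsto\sum_i x_i\,\partial_i(f\circ\gamma)(g,e)$ is regular in $g$ and linear in $x$, for $f$ ranging over the generators of the maximal ideal $\mathfrak m_{e}$ of $U$. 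Since $T_eG_K=(\mathfrak m_e/\mathfrak m_e^2)^*$, this identifies the map as regular on each chart, and the local descriptions glue. An alternative I would try first is a cleaner formulation: the tangent map of a family can be expressed via $\KK[\varepsilon]/(\varepsilon^2)$-points, that is, via the induced morphism of tangent schemes $T(G_M\times G_N)\to TG_K$ restricted to the closed subscheme $G_M\times T_eG_N$. This automatically gives a morphism of varieties landing in the fiber $T_eG_K$ over $e$.

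\textbf{Step 3 (compatibility and conclusion).} The resulting map equals $(g,x)\mapsto\Ad(g)x$, because $\Ad(g)$ is the differential of $c_g=\gamma(g,\cdot)$ at $e$. These maps are compatible with the inclusions $G_M\subseteq G_{M'}$, $G_N\subseteq G_{N'}$ and $T_eG_K\subseteq T_eG_{K'}$, since all of them are induced by closed immersions. Hence they assemble to an ind-morphism $G\times\Lie G\to\Lie G$. Here $\Lie G\times G$ is filtered by $G_M\times T_eG_N$, and this filtration is admissible for the ind-structure coming from finite-dimensional subspaces, because every finite-dimensional subspace lies in some $T_eG_N$.

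The main obstacle is Step 2: making rigorous that the differential in a second variable, taken along a section sent to a point, varies regularly with the parameter for arbitrary (possibly non-affine) varieties $G_N$. I would handle it via the tangent-scheme functor $T(-)=\mathrm{Hom}(\mathrm{Spec}\,\KK[\varepsilon],-)$, which is functorial for morphisms and compatible with products. This avoids any affineness assumption, which is exactly what the general, non-affine setting requires.
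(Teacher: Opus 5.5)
Your proposal is correct and follows essentially the same route as the paper: restrict conjugation to a morphism $G_i\times G_j\to G_N$, compose the embedding $G_i\times T_eG_j\to T(G_i\times G_j)$, $(g,y)\mapsto(0_g,y)$, with the tangent map $Tc$, note that the result lands in $T_eG_N$ because $c(g,e)=e$, assemble the maps via compatibility with the filtrations, and obtain $\Ad(gh)=\Ad(g)\Ad(h)$ from the chain rule. Your tangent-scheme formulation is exactly the paper's use of the tangent bundle of a possibly singular variety (via Borel), so the local-chart alternative in your Step 2 is not needed.
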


\begin{proof}
As in the proof of \cite[Theorem~7.4.4]{FK}, we work on filtration
elements. For fixed $i,j$, conjugation factors through
\[
 c:G_i\times G_j\longrightarrow G_N,
 \qquad c(g,h)=ghg^{-1}.
\]
Writing $TZ$ for the tangent bundle of a variety $Z$
(see \cite[AG, 16.1--16.3]{Bor91} for the possibly singular case),
we consider the composition
\[
 G_i\times T_eG_j
 \xrightarrow{\ (g,y)\mapsto(0_g,y)\ }
 T(G_i\times G_j)
 \xrightarrow{Tc}TG_N,
\]
where $0_g$ is the zero tangent vector at $g$.
Since $c(g,e)=e$, this is a regular map into $T_eG_N$,
sending $(g,y)$ to $\Ad(g)y$.
These maps are compatible as $i,j$ increase, 
so they define an ind-morphism
$G\times\Lie G\to\Lie G$.
Finally, differentiating $c(gh,u)=c(g,c(h,u))$ 
with respect to $u$ at $u=e$ gives
$\Ad(gh)=\Ad(g)\Ad(h)$.
\end{proof}

\begin{lemma}\label{lem:adjoint-differential}
For any ind-group $G$ and $x,y\in\Lie G$, one has
\[
 (d\Ad)_e(x)(y)=[x,y].
\]
\end{lemma}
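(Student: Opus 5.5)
The plan is to reduce to the standard computation of a second mixed derivative, done on filtration pieces. The difficulty is that the bracket on $\Lie G$ has to be \emph{defined} somehow for a general, possibly non-affine, ind-group. I will use the definition from \cite[Sections~2.1, 7.3]{FK}: via left-invariant vector fields, or equivalently as the second-order term of the commutator map. I will then show that $d\Ad_e$ reproduces that definition.

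Fix $x,y\in\Lie G$, and choose $i,j$ with $x\in T_eG_i$ and $y\in T_eG_j$. Consider the commutator-type map
\[
\kappa:G_i\times G_j\to G_N,\qquad \kappa(g,h)=ghg^{-1}=c(g,h),
\]
which is a morphism of varieties. By the proof of Lemma~\ref{lem:adjoint-regular}, $\Ad(g)y=d(c(g,\cdot))_e(y)$, and the map $g\mapsto \Ad(g)y$ is a morphism $G_i\to T_eG_N$. Differentiating at $g=e$ in direction $x$ therefore gives the mixed second derivative of $c$ at $(e,e)$ in directions $(x,y)$. This is well defined because $c(g,e)=e$ and $c(e,h)=h$, so both first-order partial terms are linear and the mixed term is intrinsic.

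Next I will identify this mixed second derivative with $[x,y]$. The cleanest route is to pass to regular functions. Take an open affine neighborhood $U\subseteq G_N$ of $e$ and $f\in\mathcal O(U)$. Then
\[
\bigl((d\Ad)_e(x)(y)\bigr)f=\partial_x\partial_y\big|_{(e,e)} f(ghg^{-1}).
\]
Write $f(ghg^{-1})$ using the multiplication and inverse morphisms. With the left-invariant derivation $\delta_y f(g)=\partial_y|_{h=e}f(gh)$ and the right-invariant one $\delta'_y$, the expansion at first order in $y$ gives $\partial_y|_{h=e} f(ghg^{-1})=(\delta_y f-\delta'_y f)(g)\cdot$(appropriately). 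Differentiating in $x$ at $e$ then yields $\delta_x\delta_y f(e)-\delta_y\delta_x f(e)$. This is exactly the FK definition of $[x,y]$ as the commutator of left-invariant vector fields evaluated at $e$, using $\delta_x\delta'_y=\delta'_y\delta_x$ and $\delta'_y f(e)=\delta_y f(e)$.

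All of these computations are local on finite-dimensional varieties $G_i,G_j,G_N$, which may be singular. I will handle this by embedding the relevant pieces in affine neighborhoods and working with derivations of local rings at $e$, as in Borel's treatment of tangent spaces. Compatibility as $i,j,N$ grow is then automatic.

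The main obstacle is matching conventions: I have to confirm that FK's bracket on $\Lie G$ for a non-affine ind-group is the one given by left-invariant vector fields, or by the second-order commutator. I also have to check the sign, namely that $[x,y]$ rather than $-[x,y]$ comes out. If FK define the bracket via the adjoint action itself in the affine case, I would instead reduce to that case: the argument above only uses an affine neighborhood of $e$ in each $G_N$ and the group-law morphisms restricted there, which is the same local input as in \cite[Proposition~7.5.2]{FK}. So their computation transfers verbatim.
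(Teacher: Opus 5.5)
\paragraph{A false identity in the key step.}
Your strategy is viable: identify $(d\Ad)_e(x)(y)$, as a derivation at $e$, with $f\mapsto\partial_x\partial_y|_{(e,e)}f(ghg^{-1})$, then express this through invariant derivations. But the identity at the centre of the sketch is false as written. The left side $\partial_y|_{h=e}f(ghg^{-1})$ equals $df_e(\Ad(g)y)$, a derivative of $f$ at $e$. The right side $(\delta_yf-\delta'_yf)(g)$ is a derivative of $f$ at $g$. At $g=e$ the left side is $df_e(y)$ and the right side is $0$. The two sides only share their derivative at $e$, and that coincidence is exactly the lemma, so it cannot be an input. Your hedge ``(appropriately)'' hides the whole content of the proof.

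\paragraph{How to repair it.}
The correct first-order formula is in the conjugating variable. By the chain rule through $g\mapsto(g,g^{-1})$, using that inversion has differential $-1$ at $e$,
\[
 \partial_x|_{g=e}\,f(ghg^{-1})=(\delta'_xf-\delta_xf)(h).
\]
The mixed partial of a function on $G_i\times G_j$ at $(e,e)$ does not depend on the order of differentiation: it is the $\varepsilon\eta$-coefficient of its value at $(e+\varepsilon x,e+\eta y)$. So you may differentiate next in $y$:
\[
 ((d\Ad)_e(x)(y))f=\delta_y\delta'_xf(e)-\delta_y\delta_xf(e)=\delta_x\delta_yf(e)-\delta_y\delta_xf(e).
\]
The step $\delta_y\delta'_xf(e)=\delta_x\delta_yf(e)$ is again symmetry of a mixed partial, now of $(g,h)\mapsto f(gh)$. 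With this repair your argument goes through and gives the correct sign.

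\paragraph{Comparison with the paper.}
The paper does the same computation in the language of infinitesimal points, arranged so that right-invariant fields never appear. Take $a=e+\varepsilon x$ and $b=e+\eta y$ over $\KK[\varepsilon,\eta]/(\varepsilon^2,\eta^2)$. The right-translation operators satisfy $T_a=1+\varepsilon v_x$ and $T_aT_b=T_{ab}$. Hence $T_{aba^{-1}b^{-1}}=1+\varepsilon\eta[v_x,v_y]$, which gives $aba^{-1}b^{-1}=e+\varepsilon\eta[x,y]$. On the other hand, $aba^{-1}=e+\eta y+\varepsilon\eta(d\Ad)_e(x)(y)$, and multiplying by $b^{-1}$ cancels the $\eta y$ term.

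Working with the full group commutator rather than the conjugate lets the paper use only left-invariant fields and read off the sign from a one-line operator expansion. Your conjugation route needs both kinds of invariant derivations, their commutation, and care about the order of differentiation, which is exactly where the sketch slipped. Both proofs settle the convention question you raise the same way: the bracket on $\Lie G$ is the commutator of left-invariant vector fields evaluated at $e$.
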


\begin{proof}
Let us choose a filtration member $G_i$ such that $x,y\in T_eG_i$.

Let $D_\varepsilon:=\KK[\varepsilon]/(\varepsilon^2)$.
The tangent vector $x$ defines an infinitesimal point
$a\in G_i(D_\varepsilon)$ with underlying point $e$, denoted by
$a=e+\varepsilon x$. This notation means that, for every
$F\in\mathcal O_{G_i,e}$,
\begin{equation}\label{eq:infinitesimal-point}
 F(a)=F(e)+\varepsilon\,dF_e(x).
\end{equation}
We define $b=e+\eta y$ in the same way. 
To use both points together,
we extend scalars to
$R=\KK[\varepsilon,\eta]/(\varepsilon^2,\eta^2)$.

We write $\mu^a(g):=ga$ for 
the infinitesimal right translation by $a$.
Its pullback $T_a:=(\mu^a)^*$ acts on local functions by
\[
 (T_af)(g)=f(ga).
\]

We write $\mu_g(h):=gh$ for the left translation.
Let $v_x$ be the left-invariant field
whose value at $g$ is $(d\mu_g)_e(x)$. 
It acts on local functions by
\[
 (v_xf)(g):=df_g(v_x(g))
 =df_g\bigl((d\mu_g)_e(x)\bigr)=d(f\circ\mu_g)_e(x).
\]
Applying~\eqref{eq:infinitesimal-point} 
to $h\mapsto f(gh)$ gives
\[
 (T_af)(g)
 =f(g)+\varepsilon\,d(f\circ\mu_g)_e(x)
 =f(g)+\varepsilon\,(v_xf)(g).
\]
Thus $T_a=1+\varepsilon v_x$.
Similarly, $T_b=1+\eta v_y$.
We have
\[
 (T_aT_bf)(g)=(T_bf)(ga)=f(gab),
\]
so $T_aT_b=T_{ab}$.
Since $\varepsilon^2=\eta^2=0$, 
the inverse translations give
$T_{a^{-1}}=1-\varepsilon v_x$ and
$T_{b^{-1}}=1-\eta v_y$. Consequently,
\[
 T_{aba^{-1}b^{-1}}
 =(1+\varepsilon v_x)(1+\eta v_y)
  (1-\varepsilon v_x)(1-\eta v_y)
 =1+\varepsilon\eta[v_x,v_y].
\]
Here $[v_x,v_y]=v_xv_y-v_yv_x$, and its value at $e$ is
$[x,y]$ by the definition of the Lie bracket on $\Lie G$.
For every local function $f$ near $e$, 
applying this operator identity
and evaluating at $e$ gives
\begin{align*}
 f(aba^{-1}b^{-1})
 &=(T_{aba^{-1}b^{-1}}f)(e)\\
 &=f(e)+\varepsilon\eta\,([v_x,v_y]f)(e)\\
 &=f(e)+\varepsilon\eta\,df_e([x,y]).
\end{align*}
Since this holds for every $f$, the description 
of infinitesimal points
in~\eqref{eq:infinitesimal-point}, 
with $\varepsilon\eta$ in place of
$\varepsilon$, gives
\[
 aba^{-1}b^{-1}=e+\varepsilon\eta[x,y].
\]
On the other hand, since $a=e+\varepsilon x$, applying
\eqref{eq:infinitesimal-point} to the orbit map 
$g\mapsto\Ad(g)y$ gives
\[
 \Ad(a)y=y+\varepsilon(d\Ad)_e(x)(y).
\]
Since $b=e+\eta y$, conjugation by $a$ then gives
\[
 aba^{-1}=e+\eta\Ad(a)y
 =e+\eta y+\varepsilon\eta(d\Ad)_e(x)(y).
\]
Multiplication by $b^{-1}=e-\eta y$ cancels the term $\eta y$.
Comparing with the
preceding expression for the commutator yields
$(d\Ad)_e(x)(y)=[x,y]$.
\end{proof}

\subsection{Logarithmic derivatives}

We write $\mu_g(h):=gh$ and $\mu^g(h):=hg$ 
for left and right translations in $G$.

\begin{definition}
\label{def:curve-differentials}
Let $C$ be a smooth irreducible algebraic curve and
$\alpha:C\to G$ a morphism. For $t\in C$ and $v\in T_tC$,
we define its \emph{right logarithmic derivative} by
\begin{equation}\label{eq:curve-log}
 (\delta^r\alpha)_t(v)
 :=d(\mu^{\alpha(t)^{-1}})_{\alpha(t)}(d\alpha_t(v))
 \in\Lie G.
\end{equation}
Thus, $\delta^r\alpha$ transports the velocities of $\alpha$ to the single
fixed vector space $\Lie G=T_eG$, rather than leaving them in the varying
tangent spaces $T_{\alpha(t)}G$.
In Maurer--Cartan terminology, $\delta^r\alpha=\alpha^*\omega^r$ is the
pullback along $\alpha$ of the right Maurer--Cartan form $\omega^r$;
see Remark~\ref{rem:maurer-cartan}.
For a subspace $\mathfrak a\subseteq\Lie G$, saying that
$\delta^r\alpha$ takes values in $\mathfrak a$ means
$(\delta^r\alpha)_t(T_tC)\subseteq\mathfrak a$ at every point $t$.
For a matrix-valued curve and a local parameter $z$,
\[
 \delta^r\alpha=\alpha'(z)\alpha(z)^{-1}\,dz.
\]
For Lie groups, the value of this form on $\partial/\partial z$ is
precisely the velocity $\xi(z)$ defined in
\cite[Chapter~I, \S2, subsection~5, pp.~25--26]{OV}.
\end{definition}

\begin{remark}\label{rem:maurer-cartan}
For a Lie group $G$, the right Maurer--Cartan form is
\[
 \omega^r_g=d(\mu^{g^{-1}})_g:T_gG\longrightarrow\Lie G.
\]
It identifies each tangent space with $T_eG$, giving a trivialization
$TG\simeq G\times\Lie G$. The right logarithmic derivative is its pullback:
\[
 \delta^r\alpha=\alpha^*\omega^r.
\]
The form $\omega^r$ is right invariant and satisfies the Maurer--Cartan
equation
\[
 d\omega^r-\tfrac12[\omega^r,\omega^r]=0.
\]
Our construction applies this tangent-space identification along
algebraic curves within an ind-group.
\end{remark}

\begin{remark}\label{rem:curve-log-regular}
The differential in~\eqref{eq:curve-log} is regular and takes values in
a fixed finite-dimensional subspace of $\Lie G$.
Indeed, the morphism
\[
 F:C\times C\longrightarrow G,\qquad
 F(s,t)=\alpha(s)\alpha(t)^{-1}
\]
factors through a filtration element $G_N$, and
\[
 (\delta^r\alpha)_t(v)=dF_{(t,t)}(v,0)\in T_eG_N.
\]
Let us choose an affine neighbourhood $U$ of $e$ in $G_N$ and a
closed immersion $U\hookrightarrow\mathbb A^m$.
Differentiating $s\mapsto F(s,t)$ at $s=t$, for each $t\in C$,
gives a regular bundle map $TC\to C\times\KK^m$.
Since $F$ maps a neighbourhood of the diagonal into $U$ and $F(t,t)=e$,
these tangent vectors lie in $T_eU=T_eG_N$.
\end{remark}

\begin{lemma}
\label{lem:differentiation}
For morphisms $\alpha,\beta:C\to G$ and a fixed
$x\in\Lie G$, one has the following equalities 
of $\Lie G$-valued $1$-forms on $C$:
\begin{align}
 \delta^r(\alpha\beta)
 &=\delta^r\alpha+\Ad(\alpha)\delta^r\beta,\label{eq:product}\\
 \delta^r(\alpha^{-1})
 &=-\Ad(\alpha)^{-1}\delta^r\alpha,\label{eq:inverse}\\
 d\bigl(\Ad(\alpha)x\bigr)
 &=[\delta^r\alpha,\Ad(\alpha)x].\label{eq:adjoint}
\end{align}
The bracket in~\eqref{eq:adjoint} is evaluated on tangent vectors.
\end{lemma}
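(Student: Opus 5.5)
The plan is to compute everything pointwise on tangent vectors, using the chain rule on filtration members together with the differentials of multiplication and inversion in an ind-group. All three identities are equalities of $\Lie G$-valued $1$-forms. Their values at $t\in C$ on $v\in T_tC$ depend only on first-order data, so I fix $t$ and $v$ and work inside a filtration element $G_N$ that contains the images of all the maps involved near $t$.

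\textbf{Product rule \eqref{eq:product}.} Write $g=\alpha(t)$ and $h=\beta(t)$. The map $s\mapsto \alpha(s)\beta(s)$ factors as $m\circ(\alpha,\beta)$, where $m$ is multiplication. Hence $d(\alpha\beta)_t(v)=d(\mu^{h})_g(d\alpha_t v)+d(\mu_g)_h(d\beta_t v)$. Applying $d(\mu^{(gh)^{-1}})_{gh}$ and using $\mu^{(gh)^{-1}}\circ\mu^{h}=\mu^{g^{-1}}$, the first term becomes $(\delta^r\alpha)_t(v)$. For the second term, note that $\mu^{h^{-1}g^{-1}}\circ\mu_g=c_g\circ\mu^{h^{-1}}$, where $c_g$ is conjugation by $g$. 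The chain rule together with the definition of $\Ad(g)=d(c_g)_e$ (Lemma~\ref{lem:adjoint-regular}) then gives $\Ad(g)(\delta^r\beta)_t(v)$.

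\textbf{Inverse rule \eqref{eq:inverse}.} Apply \eqref{eq:product} to $\alpha\cdot\alpha^{-1}\equiv e$. The left side is $\delta^r$ of a constant map, which is $0$. Solving for $\delta^r(\alpha^{-1})$ gives the formula.

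\textbf{Adjoint rule \eqref{eq:adjoint}.} This is the main step, because it requires differentiating $\Ad$ away from the identity. The orbit map $s\mapsto\Ad(\alpha(s))x$ takes values in a finite-dimensional subspace (fact (3)), so $d$ makes sense there. I will write $\alpha(s)=\gamma(s)\alpha(t)$ with $\gamma(s):=\alpha(s)\alpha(t)^{-1}$, so that $\gamma(t)=e$ and, by Remark~\ref{rem:curve-log-regular}, $d\gamma_t(v)=(\delta^r\alpha)_t(v)$. Since $\Ad$ is a homomorphism, $\Ad(\alpha(s))x=\Ad(\gamma(s))y$, where $y:=\Ad(\alpha(t))x$ is fixed. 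Differentiating at $s=t$ and using the chain rule for the orbit map $\mu_y\colon g\mapsto\Ad(g)y$ at $e$ gives $(d\mu_y)_e(d\gamma_t v)=(d\Ad)_e\bigl((\delta^r\alpha)_t v\bigr)(y)$. Lemma~\ref{lem:adjoint-differential} turns this into $[(\delta^r\alpha)_t v,\Ad(\alpha(t))x]$.

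The only care needed is to justify that the chain rule applies to these ind-morphisms. This holds because every map in question restricts to morphisms between filtration members, or into finite-dimensional subspaces of $\Lie G$.
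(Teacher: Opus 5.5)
Your proposal is correct and follows the paper's proof essentially step by step. You use the same factorizations $\mu^{(gh)^{-1}}\circ\mu^h=\mu^{g^{-1}}$ and $\mu^{(gh)^{-1}}\circ\mu_g=c_g\circ\mu^{h^{-1}}$ for the product rule, the same argument with $\alpha\alpha^{-1}=e$ for the inverse rule, and the same reduction to the identity via $\Ad(ha)=\Ad(h)\Ad(a)$ followed by Lemma~\ref{lem:adjoint-differential} for the adjoint rule; your factorization $\alpha(s)=\gamma(s)\alpha(t)$ is only a rephrasing of the paper's use of the right translation $h\mapsto ha$.
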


\begin{proof}
For fixed $t\in C$ and a tangent vector $\xi\in T_tC$, we set
$a:=\alpha(t)$, $b:=\beta(t)$, and
\[
 u_\alpha:=d\alpha_t(\xi)\in T_a G,
 \qquad
 u_\beta:=d\beta_t(\xi)\in T_b G.
\]
Thus $u_\alpha$ and $u_\beta$ are the
velocities of $\alpha$ and $\beta$ in the same direction $\xi\in T_tC$.
If $z$ is a local parameter and $\xi=\partial/\partial z$, then simply
$u_\alpha=\alpha'(z)$ and $u_\beta=\beta'(z)$. 

Let us turn to the product formula \eqref{eq:product}.
Let $\mu:G\times G\to G$ be the multiplication map. Its differential at
$(a,b)$ is
\[
 d\mu_{(a,b)}(u_\alpha,u_\beta)
 =d(\mu^b)_a(u_\alpha)+d(\mu_a)_b(u_\beta).
\]
Hence
\[
 d(\alpha\beta)_t(\xi)
 =d(\mu^b)_a(u_\alpha)+d(\mu_a)_b(u_\beta).
\]
To obtain the right logarithmic derivative, we apply
$d(\mu^{(ab)^{-1}})_{ab}$.  For the first summand, we get
\[
 \mu^{(ab)^{-1}}\circ \mu^b=\mu^{a^{-1}}.
\]
It follows that
\[
 d(\mu^{(ab)^{-1}})_{ab}\,d(\mu^b)_a(u_\alpha)
 =d(\mu^{a^{-1}})_a(u_\alpha)
 =(\delta^r\alpha)_t(\xi).
\]
For the second summand, let $c_a(h):=aha^{-1}$.  Then
\[
 \mu^{(ab)^{-1}}\circ \mu_a=c_a\circ \mu^{b^{-1}}.
\]
The differential of $c_a$ at $e$ is $\Ad(a)$.  Therefore
\[
 d(\mu^{(ab)^{-1}})_{ab}\,d(\mu_a)_b(u_\beta)
 =\Ad(a)\,d(\mu^{b^{-1}})_b(u_\beta)
=\Ad(a)(\delta^r\beta)_t(\xi).
\]
Adding the two terms gives
\[
 \bigl(\delta^r(\alpha\beta)\bigr)_t(\xi)
 =(\delta^r\alpha)_t(\xi)
 +\Ad(\alpha(t))(\delta^r\beta)_t(\xi).
\]
Since $t$ and $\xi$ were arbitrary,  
\eqref{eq:product} follows. 

To prove the inversion formula \eqref{eq:inverse}, we
apply the product formula to the constant curve
$\alpha\alpha^{-1}=e$.  
The logarithmic derivative of a constant curve is
zero, so
\[
 0=\delta^r(\alpha\alpha^{-1})
 =\delta^r\alpha+\Ad(\alpha)\delta^r(\alpha^{-1}).
\]
Applying $\Ad(\alpha)^{-1}$ gives \eqref{eq:inverse}.

To prove formula \eqref{eq:adjoint} 
for the differential of the adjoint action,
let us fix an arbitrary $x\in \Lie G$ and consider
\[
 f:C\longrightarrow\Lie G,
 \qquad
 f(t)=\Ad(\alpha(t))x.
\]
For fixed $t$ and $\xi\in T_tC$, we set
$a:=\alpha(t)$ and $y:=(\delta^r\alpha)_t(\xi)$.
The definition of the right logarithmic derivative says
\[
 y=d(\mu^{a^{-1}})_a(d\alpha_t(\xi)).
\]
Since $\mu^a$ and $\mu^{a^{-1}}$ 
are inverse maps, this is equivalent to
\[
 d\alpha_t(\xi)=d(\mu^a)_e(y).
\]
Thus the velocity of $\alpha$ at $a$ is obtained 
by right-translating $y\in T_e G$ to $a$.

To differentiate $g\mapsto\Ad(g)x$ at $g=a$ 
in this direction, we use the map $h\mapsto\Ad(ha)x$.
The map $h\mapsto ha$ sends $e$ to $a$, 
and its differential sends $y$ to $d\alpha_t(\xi)$.  Moreover,
\[
 \Ad(ha)x=\Ad(h)\Ad(a)x.
\]
Consequently,
\[
 df_t(\xi)=(d\Ad)_e(y)\bigl(\Ad(a)x\bigr).
\]
Using $(d\Ad)_e(y)(z)=[y,z]$, we obtain
\[
 df_t(\xi)=[y,\Ad(a)x]
 =[\,(\delta^r\alpha)_t(\xi),\Ad(\alpha(t))x\,].
\]
Now  \eqref{eq:adjoint} follows.
\end{proof}
\begin{remark}
For matrices, the product and adjoint formulas 
\eqref{eq:product} and \eqref{eq:adjoint} reduce to
\[
 (\alpha\beta)'(\alpha\beta)^{-1}
 =\alpha'\alpha^{-1}
  +\alpha(\beta'\beta^{-1})\alpha^{-1}
\]
and
\[
 (\alpha x\alpha^{-1})'
 =[\alpha'\alpha^{-1},\alpha x\alpha^{-1}].
\]
\end{remark}

\subsection{Vanishing differentials on curves}

\begin{lemma}\label{lem:constant}
Let $C$ be a smooth irreducible algebraic curve.
A morphism $\varphi:C\to Y$ to an ind-variety
$Y$ with zero differential is constant.
Moreover, if $f:C\to V$ is a regular map to a countable-dimensional
vector space and $W\subseteq V$ is a linear subspace, then
$df(TC)\subseteq W$ implies that $f\bmod W$ is constant.
\end{lemma}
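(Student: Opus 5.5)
The plan is to reduce everything to the classical statement for morphisms of irreducible varieties in characteristic zero: a morphism of smooth irreducible curves (or varieties) with identically zero differential is constant. Since $\KK$ has characteristic zero, generic smoothness applies. For $\varphi:C\to Y$, the image $\varphi(C)$ is contained in some filtration element $Y_N$, because $C$ is an algebraic variety and ind-morphisms from varieties factor through filtration elements. Since $Y_N\hookrightarrow Y$ is a closed immersion, the differential of $\varphi$ viewed in $Y$ vanishes if and only if it vanishes viewed in $Y_N$; tangent spaces of $Y_N$ inject into those of $Y$. So we may assume $Y$ is an algebraic variety.

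Next, replace $Y$ by the closure $Z:=\overline{\varphi(C)}$ with its reduced structure. This is irreducible, and the differential into $T Z$ is still zero because $TZ\subseteq TY$ at each point. If $\dim Z=1$, then $\varphi:C\to Z$ is dominant. In characteristic zero a dominant morphism of irreducible varieties is generically smooth: over a dense open subset $U\subseteq Z_{\mathrm{reg}}$, the differential $d\varphi_t:T_tC\to T_{\varphi(t)}Z$ is surjective onto a one-dimensional space. That contradicts $d\varphi=0$. Hence $\dim Z=0$, so $Z$ is a point and $\varphi$ is constant. Equivalently, one can pull back a local function $F$ on $Y$: $d(F\circ\varphi)=0$, and a regular function on a smooth curve with zero derivative is locally constant, because $\KK(C)$ has characteristic zero and $\ker(d:\KK(C)\to\Omega_{\KK(C)})=\KK$, the algebraic closure of $\KK$ in $\KK(C)$. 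The points then separate, so $\varphi$ is constant. I would present this function-theoretic version, since it avoids generic smoothness and transfers directly to the second claim.

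For the second statement, first note that $f(C)$ lies in a finite-dimensional subspace $V_0\subseteq V$, since $f$ factors through a filtration element. Let $\pi:V\to V/W$ be the projection. Then $\pi\circ f:C\to V_0/(V_0\cap W)$ is a regular map to a finite-dimensional vector space, and $d(\pi\circ f)=\pi\circ df=0$. Choose linear coordinates $\ell_1,\dots,\ell_k$ on $V_0/(V_0\cap W)$. Each $\ell_j\circ\pi\circ f$ is a regular function on $C$ with zero differential, hence constant by the first part applied with $Y=\mathbb A^1$. So $f\bmod W$ is constant.

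The only point needing care is the claim that a regular function $h$ on a smooth irreducible curve with $dh=0$ is constant. If $h$ were nonconstant, it would be transcendental over $\KK$, and $\KK(C)/\KK(h)$ would be a finite separable extension because the characteristic is zero. Then $dh$ would be a nonzero generator of $\Omega_{\KK(C)/\KK}$, which is a contradiction. I expect this to be the main (though standard) step; everything else is bookkeeping about filtrations and closed immersions.
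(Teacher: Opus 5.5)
Your proof is correct and takes essentially the same route as the paper: you factor $\varphi$ through a filtration member and invoke the characteristic-zero fact that a nonconstant morphism from a curve has nonzero differential somewhere, which you justify in more detail than the paper does, via generic smoothness or the function-field fact that $dh=0$ forces $h\in\KK$. For the second claim you likewise pass to $C\to V_0/(V_0\cap W)$ with $V_0$ finite dimensional; the step you elide, from constancy on the dense open set $\varphi^{-1}(U)$ to constancy on all of $C$, is immediate because the fibre $\varphi^{-1}(\varphi(t_0))$ is closed and contains a dense open subset.
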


\begin{proof}
The morphism $\varphi$ factors through a filtration member
$Y_n$. Clearly, its differential in $Y_n$ is also zero.
A nonconstant morphism from a curve in
characteristic zero has nonzero differential at a general point,
which proves the first assertion.

For the second, we choose a finite-dimensional subspace $V_0$ containing
$f(C)$ and apply the first assertion to
$C\to V_0/(V_0\cap W)$.
\end{proof}

\section{The commutator lemma}\label{sec:commutator}

In this section, we use curves tangent to Lie ideals to transfer solvability
and nilpotency bounds from Lie algebras to groups.
Throughout, we fix a subgroup
$H\subseteq G$ and an $\Ad(H)$-stable Lie subalgebra
$\lalg\subseteq\Lie G$.
We regard $H$ and the subgroups constructed below as abstract groups.

We use $[a,b]:=aba^{-1}b^{-1}$ and denote by $[A,B]$ the subgroup
generated by the commutators $[a,b]$ with $a\in A$ and $b\in B$.

\begin{definition}\label{def:tangent-subgroup}
Let $\mathfrak a\subseteq\lalg$ be an $\Ad(H)$-stable Lie ideal.
We say that a morphism $\alpha:C\to G$ from a smooth irreducible
algebraic curve $C$ is \emph{tangent to $\mathfrak a$} if
\begin{equation}\label{eq:admissible}
 (\delta^r\alpha)_t(T_tC)\subseteq\mathfrak a
 \qquad\text{for every }t\in C.
\end{equation}
We denote by $H(\mathfrak a)$ the subgroup generated by the images of
all curves tangent to $\mathfrak a$ whose images lie in $H$ and contain $e$.
For such a curve, we choose a point $p\in C$ with $\alpha(p)=e$
and call it the \emph{marked point}.
\end{definition}

This construction is analogous to the integral subgroup associated with
a Lie subalgebra in classical Lie theory; see
\cite[Chapter~I, \S2, Theorem~8]{OV}. Here we use algebraic curves in an
ind-group.

The construction is monotone in $\mathfrak a$.
Conjugation by $h\in H$ sends a curve tangent to $\mathfrak a$ to another
such curve, because its translated tangent vectors are obtained by
applying $\Ad(h)$. Consequently,
\begin{equation}\label{eq:normal-zero}
 H(\mathfrak a)\triangleleft H,\qquad H(0)=\{e\}.
\end{equation}
The second assertion follows from Lemma~\ref{lem:constant},
since right translation is an isomorphism on tangent spaces.

\begin{lemma}\label{lem:commutator}
For $\Ad(H)$-stable ideals $\mathfrak a,\mathfrak b$ of $\lalg$, one has
\[
 [H(\mathfrak a),H(\mathfrak b)]
 \subseteq H([\mathfrak a,\mathfrak b]).
\]
\end{lemma}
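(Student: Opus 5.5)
Since $H([\mathfrak a,\mathfrak b])$ is normal in $H$ by \eqref{eq:normal-zero}, it is enough to show that $[a,b]\in H([\mathfrak a,\mathfrak b])$ for generators. Here $a=\alpha(s)$ and $b=\beta(t)$, where $\alpha:C\to G$ and $\beta:D\to G$ are curves tangent to $\mathfrak a$ and $\mathfrak b$ with images in $H$ and marked points $p,q$. The standard commutator identities $[xy,z]=x[y,z]x^{-1}[x,z]$ and $[x,yz]=[x,y]\,y[x,z]y^{-1}$ handle products and inverses of generators. Together with normality of $H([\mathfrak a,\mathfrak b])$, they reduce the claim for arbitrary elements of $H(\mathfrak a)$ and $H(\mathfrak b)$ to the case of single generators.

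Fix $s\in C$ and set $a:=\alpha(s)\in H$. I would consider the curve
\[
 \gamma:D\to G,\qquad \gamma(t):=a\beta(t)a^{-1}\beta(t)^{-1}.
\]
It lies in $H$ and satisfies $\gamma(q)=e$. By Lemma~\ref{lem:differentiation}, \eqref{eq:product} and \eqref{eq:inverse} give
\[
 \delta^r\gamma=\Ad(a)\,\delta^r\beta-\Ad(a\beta a^{-1})\Ad(\beta)^{-1}\delta^r\beta
 =\Ad(a\beta a^{-1})\bigl(\Ad(\beta)\Ad(a)^{-1}\Ad(\beta)^{-1}-1\bigr)\Ad(\beta)\,\delta^r\beta .
\]
It therefore suffices to show that $\Ad(g)y-y\in[\mathfrak a,\mathfrak b]$ for every $y\in\mathfrak b$ and every $g$ in the group generated by $\alpha(C)$ and its conjugates under $\beta(D)$. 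Then $\delta^r\gamma$ takes values in $\Ad(H)[\mathfrak a,\mathfrak b]=[\mathfrak a,\mathfrak b]$; this uses that $\mathfrak a,\mathfrak b$ are $\Ad(H)$-stable and $\Ad(h)$ is a Lie automorphism, so $[\mathfrak a,\mathfrak b]$ is $\Ad(H)$-stable. Hence $\gamma$ is tangent to $[\mathfrak a,\mathfrak b]$, and $[a,\beta(t)]=\gamma(t)\in H([\mathfrak a,\mathfrak b])$.

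The key step is the claim that $\Ad(\alpha(s))y-y\in[\mathfrak a,\mathfrak b]$ for $y\in\mathfrak b$. I would apply \eqref{eq:adjoint}: the map $f(s)=\Ad(\alpha(s))y$ has $df=[\delta^r\alpha,\Ad(\alpha)y]$, and this lies in $[\mathfrak a,\mathfrak b]$ because $\Ad(\alpha(s))y\in\mathfrak b$ by $\Ad(H)$-stability. By the second part of Lemma~\ref{lem:constant}, applied with $W=[\mathfrak a,\mathfrak b]$, the class of $f \bmod W$ is constant, equal to its value $y$ at $p$. This step carries the real content of the argument. One must also check that $[\mathfrak a,\mathfrak b]$ is the linear span of brackets, so that Lemma~\ref{lem:constant} applies to a linear subspace.

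To pass to conjugates and products of such $g$, I would use that $\Ad(h)$ preserves $[\mathfrak a,\mathfrak b]$ and $\mathfrak b$ for $h\in H$. This gives $\Ad(hgh^{-1})y-y=\Ad(h)\bigl(\Ad(g)y'-y'\bigr)$ with $y'=\Ad(h)^{-1}y\in\mathfrak b$. For products, $\Ad(g_1g_2)y-y=\bigl(\Ad(g_1)(\Ad(g_2)y)-\Ad(g_2)y\bigr)+\bigl(\Ad(g_2)y-y\bigr)$. In fact the claim then holds for all $g\in H(\mathfrak a)$, which is all that is needed. The main obstacle I expect is keeping these stabilities straight rather than any deep difficulty.
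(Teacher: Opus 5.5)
Your proposal is correct and follows the paper's proof with the roles of $\mathfrak a$ and $\mathfrak b$ interchanged. The paper first uses \eqref{eq:adjoint} and Lemma~\ref{lem:constant} to show that $H(\mathfrak b)$ acts trivially on $\mathfrak a/[\mathfrak a,\mathfrak b]$, then differentiates $s\mapsto[\gamma(s),g]$ along a curve tangent to $\mathfrak a$ with $g\in H(\mathfrak b)$ fixed, and finishes by normality of $H([\mathfrak a,\mathfrak b])$, just as you do.

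One slip: your second displayed factorization of $\delta^r\gamma$ does not equal your first expression; for instance, its second term is $-\Ad(a\beta a^{-1}\beta)\delta^r\beta$ instead of $-\Ad(a\beta a^{-1}\beta^{-1})\delta^r\beta$. The correct identity is $\delta^r\gamma=\Ad(a\beta a^{-1})(\Ad(a)-1)\Ad(\beta)^{-1}\delta^r\beta$. It needs your key claim only for $a=\alpha(s)$ itself, so your conclusion is unaffected.
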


\begin{proof}
We first prove that $H(\mathfrak b)$ acts trivially on
$\mathfrak a/[\mathfrak a,\mathfrak b]$.
We set $\mathfrak c:=[\mathfrak a,\mathfrak b]$, an $\Ad(H)$-stable ideal
of $\lalg$. Let $\alpha:C\to G$ be one of the curves generating $H(\mathfrak b)$,
and let $x\in\mathfrak a$.
Formula~\eqref{eq:adjoint} gives
\[
 d\bigl(\Ad(\alpha)x\bigr)
 =[\delta^r\alpha,\Ad(\alpha)x].
\]
Since $\delta^r\alpha$ takes values in $\mathfrak b$ and
$\Ad(\alpha)x\in\mathfrak a$, this differential takes values in
$\mathfrak c$. By Lemma~\ref{lem:constant},
$\Ad(\alpha(t))x\bmod\mathfrak c$ is constant, with value
$x\bmod\mathfrak c$ at the marked point. Thus every element in the image
of $\alpha$ acts trivially on $\mathfrak a/\mathfrak c$ via the adjoint
action. Since the images of all such curves generate $H(\mathfrak b)$,
every $g\in H(\mathfrak b)$ acts trivially on this quotient. Equivalently,
\begin{equation}\label{eq:adjoint-difference}
 (\Ad(g)-1)\mathfrak a\subseteq\mathfrak c
 \qquad\text{for any }g\in H(\mathfrak b).
\end{equation}

Let us fix such a $g$ and one of the curves $\gamma:S\to G$ generating
$H(\mathfrak a)$, with marked point $p$. We set
$\eta(s):=[\gamma(s),g]$.
Since $g$ is constant, $\delta^r g=\delta^r(g^{-1})=0$.
Using~\eqref{eq:product} and~\eqref{eq:inverse}, we obtain
\begin{align*}
 \delta^r\eta
 &=\delta^r(\gamma g\gamma^{-1})\\
 &=\delta^r\gamma+\Ad(\gamma g)\delta^r(\gamma^{-1})\\
 &=\delta^r\gamma-\Ad(\gamma g)\Ad(\gamma)^{-1}\delta^r\gamma\\
 &=\delta^r\gamma-\Ad(\gamma g\gamma^{-1})\delta^r\gamma\\
 &=\Ad(\gamma)(1-\Ad(g))\Ad(\gamma)^{-1}\delta^r\gamma.
\end{align*}
The last two equalities use $\Ad(uv)=\Ad(u)\Ad(v)$.
At $\gamma(p)=e$, this reduces to the differential $1-\Ad(g)$ of
$h\mapsto[h,g]$ at $e$; cf. \cite[proof of Lemma~5.1.4]{KZ},
where the opposite order $h\mapsto[g,h]$ gives $\Ad(g)-1$.
Thus $\eta$ is tangent to $\mathfrak c$ by~\eqref{eq:adjoint-difference}
and invariance of $\mathfrak a$ and $\mathfrak c$. Since $\eta(p)=e$ and
$\eta(S)\subseteq H$, we have $\eta(S)\subseteq H(\mathfrak c)$.
By~\eqref{eq:normal-zero}, $H(\mathfrak c)$ is normal in $H$.
In the quotient $H/H(\mathfrak c)$, the image of every generator of
$H(\mathfrak a)$ commutes with the image of $H(\mathfrak b)$.
Hence the images of $H(\mathfrak a)$ and $H(\mathfrak b)$ commute, so
$[H(\mathfrak a),H(\mathfrak b)]\subseteq H(\mathfrak c)$.
\end{proof}

For a group $H$, we write
\[
 H^{(0)}:=H,\qquad H^{(r+1)}:=[H^{(r)},H^{(r)}],\qquad
 H_1:=H,\qquad H_{r+1}:=[H,H_r]
\]
for its derived and lower central series, respectively.
The Lie-algebra series have the same notation and indexing.

\begin{corollary}\label{cor:transfer}
If $H(\lalg)=H$, then
\[
 H^{(r)}\subseteq H(\lalg^{(r)})\quad(r\geq0),\qquad
 H_r\subseteq H(\lalg_r)\quad(r\geq1).
\]
In particular, if $\lalg$ is solvable (respectively, nilpotent), then
so is $H$, with $\dl(H)\leq\dl(\lalg)$ (respectively, $c(H)\leq c(\lalg)$).
\end{corollary}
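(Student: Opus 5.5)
The plan is a straightforward induction on $r$, driven by Lemma~\ref{lem:commutator} together with the monotonicity of $\mathfrak a\mapsto H(\mathfrak a)$.

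First I check that every term of the derived series $\lalg^{(r)}$ and of the lower central series $\lalg_r$ is an $\Ad(H)$-stable ideal of $\lalg$, so that $H(\lalg^{(r)})$ and $H(\lalg_r)$ are defined. Both properties pass from $\mathfrak a,\mathfrak b$ to $[\mathfrak a,\mathfrak b]$.
\begin{itemize}
\item \emph{Ideal:} if $\mathfrak a,\mathfrak b$ are ideals of $\lalg$, then so is $[\mathfrak a,\mathfrak b]$, by the Jacobi identity.
\item \emph{Stability:} each $\Ad(h)$ with $h\in H$ is a Lie algebra automorphism of $\Lie G$. This follows by differentiating the conjugation $c_h$, which is an ind-group automorphism, or from Lemma~\ref{lem:adjoint-differential}. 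Hence $\Ad(h)[\mathfrak a,\mathfrak b]=[\Ad(h)\mathfrak a,\Ad(h)\mathfrak b]$.
\end{itemize}
Starting from $\lalg$ itself, which is $\Ad(H)$-stable by hypothesis, induction shows that all terms of both series have the required properties.

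For the derived series, the base case $r=0$ is the hypothesis $H^{(0)}=H=H(\lalg)$. Suppose $H^{(r)}\subseteq H(\lalg^{(r)})$. Applying Lemma~\ref{lem:commutator} with $\mathfrak a=\mathfrak b=\lalg^{(r)}$ gives
\[
H^{(r+1)}=[H^{(r)},H^{(r)}]\subseteq[H(\lalg^{(r)}),H(\lalg^{(r)})]\subseteq H(\lalg^{(r+1)}).
\]
The first inclusion holds because the commutator subgroup is monotone in both arguments.

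For the lower central series, the base case $H_1=H=H(\lalg)=H(\lalg_1)$ again uses the hypothesis. For the step, take $\mathfrak a=\lalg$ and $\mathfrak b=\lalg_r$ in Lemma~\ref{lem:commutator}:
\[
H_{r+1}=[H,H_r]\subseteq[H(\lalg),H(\lalg_r)]\subseteq H([\lalg,\lalg_r])=H(\lalg_{r+1}).
\]

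Finally, suppose $\dl(\lalg)=d$, so that $\lalg^{(d)}=0$. Then $H^{(d)}\subseteq H(0)=\{e\}$ by \eqref{eq:normal-zero}, and hence $\dl(H)\leq d$. Similarly, if $c(\lalg)=c$, then $\lalg_{c+1}=0$, so $H_{c+1}=\{e\}$ and $c(H)\leq c$.

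None of these steps is a real obstacle, since the commutator lemma does all the work. The only point needing care is the $\Ad(H)$-invariance of the brackets, which amounts to $\Ad(h)$ preserving the Lie bracket.
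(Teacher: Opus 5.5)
Your proposal is correct and matches the paper's proof: both argue by induction on $r$ using Lemma~\ref{lem:commutator}, starting from $H(\lalg)=H$ and ending with $H(0)=\{e\}$, after noting that the derived and lower central ideals of $\lalg$ are $\Ad(H)$-stable. The paper leaves implicit why $\Ad(h)$ preserves brackets, and your justification of that point fills in exactly the detail it assumes.
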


\begin{proof}
The derived and lower central ideals of $\lalg$ are $\Ad(H)$-stable.
The assertions follow by induction from Lemma~\ref{lem:commutator},
starting with $H(\lalg)=H$ and using $H(0)=\{e\}$.
\end{proof}

\section{Proofs of the main results}\label{sec:main-proofs}

\begin{proof}[Proof of Theorem~\ref{thm:main}]
We apply Corollary~\ref{cor:transfer} with $H=G$ and $\lalg=\g$.
Every element of $G$ can be
joined to $e$ by a morphism from a smooth irreducible curve, by
curve-connectedness and normalization. Every such curve is tangent
to $\g$, so $H(\g)=G$. Corollary~\ref{cor:transfer} gives the required
group bounds. For the reverse bounds, the proof of
\cite[Lemma~5.1.4]{KZ} applies without affineness: it uses regularity of
$\Ad$ and the identity $d\Ad_e=\ad$, supplied by
Lemmas~\ref{lem:adjoint-regular} and~\ref{lem:adjoint-differential}.
Indeed, for closed ind-subgroups $A,B\subseteq G$, differentiating
$aba^{-1}b^{-1}$ first in $b$ and then in $a$ gives
\[
 [\Lie A,\Lie B]\subseteq\Lie\overline{[A,B]}.
\]
Taking closures does not increase derived length or nilpotency class,
since the corresponding commutator identities are closed conditions.
Induction along the closed derived or lower central series gives the
reverse bounds.
\end{proof}

\begin{proposition}\label{prop:representation}
We retain the notation of Section~\ref{sec:commutator} and assume $H(\lalg)=H$.
Let $\rho:G\to\GL(V)$ be a regular linear representation on a
countable-dimensional vector space and set $\h:=d\rho_e(\lalg)$.
If $\h$ is solvable (respectively, nilpotent), then so is $\rho(H)$, with
$\dl(\rho(H))\leq\dl(\h)$ (respectively, $c(\rho(H))\leq c(\h)$).
\end{proposition}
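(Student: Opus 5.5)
The plan is to pull back the ideals of $\h$ to $\Ad(H)$-stable ideals of $\lalg$, and then use the commutator lemma together with the observation that curves tangent to $\ker d\rho_e\cap\lalg$ act trivially on $V$. Let $\pi:=d\rho_e|_{\lalg}:\lalg\to\h$, a surjective Lie algebra homomorphism, and set $\mathfrak k:=\ker\pi$. For each term $\h^{(r)}$ (respectively $\h_r$) put $\mathfrak a_r:=\pi^{-1}(\h^{(r)})$ (respectively $\pi^{-1}(\h_r)$). These are ideals of $\lalg$ containing $\mathfrak k$. The first thing to check is that they are $\Ad(H)$-stable. For this I would use the equivariance $d\rho_e(\Ad(g)x)=\rho(g)\,d\rho_e(x)\,\rho(g)^{-1}$, obtained by differentiating $\rho(ghg^{-1})=\rho(g)\rho(h)\rho(g)^{-1}$ in $h$ at $e$. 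Consequently $\Ad(g)$ for $g\in H$ induces the automorphism $\rho(g)(\cdot)\rho(g)^{-1}$ on $\h$. This automorphism preserves the characteristic ideals $\h^{(r)},\h_r$, so their preimages are $\Ad(H)$-stable.

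Next, I would show by induction that $H^{(r)}\subseteq H(\mathfrak a_r)$, using Lemma~\ref{lem:commutator} in the same way as in Corollary~\ref{cor:transfer}. The base case is $H(\mathfrak a_0)=H(\lalg)=H$. For the inductive step, $[\mathfrak a_r,\mathfrak a_r]\subseteq\mathfrak a_{r+1}$ because $\pi([\mathfrak a_r,\mathfrak a_r])=[\h^{(r)},\h^{(r)}]$, and monotonicity then gives the inclusion. The lower central case is analogous, with $[\lalg,\mathfrak a_r]\subseteq\mathfrak a_{r+1}$. If $d=\dl(\h)$, this yields $H^{(d)}\subseteq H(\mathfrak k)$. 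Since $\rho(H^{(d)})=\rho(H)^{(d)}$, it remains to prove that $\rho(H(\mathfrak k))=\{1\}$.

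That vanishing step is the heart of the argument. Take a generating curve $\alpha:C\to G$ tangent to $\mathfrak k$ with $\alpha(p)=e$, and fix $v\in V$. Consider $f(t):=\rho(\alpha(t))v$, which lies in a finite-dimensional subspace by fact (3). I would compute $df_t(\xi)=d\rho_e\bigl((\delta^r\alpha)_t(\xi)\bigr)\,\rho(\alpha(t))v$. This should follow as in the proof of~\eqref{eq:adjoint}, by writing $\rho(h\alpha(t))v=\rho(h)\rho(\alpha(t))v$ and using fact (3), $(d\rho_e(y))(w)=(d\mu_w)_e(y)$, with $w=\rho(\alpha(t))v$. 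Since $\delta^r\alpha$ takes values in $\mathfrak k$, we get $df=0$. Lemma~\ref{lem:constant} then shows $f$ is constant, equal to $v$, so $\rho(\alpha(t))=1$ for all $t$. Hence $\rho(H(\mathfrak k))=\{1\}$, and the bounds follow; in the nilpotent case we also use $\rho(H_{c+1})=\rho(H)_{c+1}$.

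I expect the main obstacle to be the careful justification of the derivative formula for $f$, in particular that $v\mapsto\rho(g)v$ differentiates correctly on the ind-variety $V$ through a finite-dimensional subspace. The equivariance of $d\rho_e$ under $\Ad$ needs the same kind of care; both can be handled on filtration pieces, mirroring Lemma~\ref{lem:differentiation}.
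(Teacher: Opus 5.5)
Your proposal is correct and follows the paper's proof essentially step for step. Like the paper, you pull back the derived and lower central ideals of $\h$ along $d\rho_e|_{\lalg}$ to $\Ad(H)$-stable ideals of $\lalg$. You then apply Lemma~\ref{lem:commutator} inductively to land in $H(\ker d\rho_e|_{\lalg})$. Finally, you show that curves tangent to this kernel act trivially, using $d(\rho(\alpha)v)=d\rho_e(\delta^r\alpha)\,\rho(\alpha)v$ and Lemma~\ref{lem:constant}. Your explicit derivations of the $\Ad$-equivariance of $d\rho_e$ and of this derivative formula supply the details that the paper compresses into ``equivariance'' and ``the chain rule''.
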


\begin{proof}
Let $f:=d\rho_e|_{\lalg}$ and $\mathfrak n:=\ker f$. Equivariance of
$d\rho_e$ makes the inverse images under $f$ of the characteristic
ideals of $\h$ into $\Ad(H)$-stable ideals of $\lalg$.
We set $\mathfrak a_r:=f^{-1}(\h^{(r)})$. Since
\[
 \mathfrak a_0=\lalg,\qquad
 [\mathfrak a_r,\mathfrak a_r]\subseteq\mathfrak a_{r+1},
\]
Lemma~\ref{lem:commutator} and monotonicity of the construction
$\mathfrak a\mapsto H(\mathfrak a)$ give
$H^{(r)}\subseteq H(\mathfrak a_r)$.
If $\h^{(d)}=0$, then $\mathfrak a_d=\mathfrak n$.

We claim that $H(\mathfrak n)\subseteq\ker\rho$.
Let $\alpha$ be one of the curves generating $H(\mathfrak n)$.
For $v\in V$, the chain rule gives
\[
 d\bigl(\rho(\alpha)v\bigr)
 =d\rho_e(\delta^r\alpha)\,\rho(\alpha)v=0.
\]
Here $d\rho_e$ is applied to the values of the $1$-form.
By Lemma~\ref{lem:constant}, the orbit curve is constant, with value $v$
at the marked point.
Thus every generating curve of $H(\mathfrak n)$ acts trivially, proving
the claim and the derived-series assertion.

For nilpotence, we set $\mathfrak b_r:=f^{-1}(\h_r)$.
Then $\mathfrak b_1=\lalg$ and
$[\lalg,\mathfrak b_r]\subseteq\mathfrak b_{r+1}$, so the same lemma
gives $H_r\subseteq H(\mathfrak b_r)$.
When $\h_{c+1}=0$, the last subgroup is $H(\mathfrak n)$,
which acts trivially.
\end{proof}

\begin{proof}[Proof of Corollary~\ref{cor:linear}]
We apply Proposition~\ref{prop:representation} with
$H=G$ and $\lalg=\Lie G$. Connectedness gives
$H(\lalg)=G$, and the proposition gives both assertions.
\end{proof}

\begin{remark}\label{rem:linear}
Let $\rho:G\to\GL(V)$ be a regular representation as in
Corollary~\ref{cor:linear}, and set $\h:=d\rho_e(\Lie G)$.
The image $\rho(G)$ is considered as an abstract
group; no ind-group structure on $\GL(V)$ is assumed. Even when the
representation is realized in an ambient ind-group whose Lie algebra
embeds into $\End(V)$, applying Theorem~\ref{thm:main} to the connected closure of
$\rho(G)$ requires bounds for $\Lie\overline{\rho(G)}$,
whereas our hypothesis concerns only $\h$.
This leads to the following question: for a homomorphism of ind-groups
$q:G\to\mathcal K$ with $G$ connected, do we have
\[
 \Lie\overline{q(G)}=dq_e(\Lie G)?
\]
Here the closure is taken in $\mathcal K$. The inclusion from right to
left is automatic, but connectedness of the closure does not establish
the reverse inclusion. Already when $q$ is surjective, this asks whether
its differential is surjective, a question left open in
\cite[Section~1.11]{KZ}.

Our separate proof of Corollary~\ref{cor:linear} works directly with the
action on $V$ and does not require such an identification.
\end{remark}

\section{Algebraically generated groups and applications}
\label{sec:algebraically-generated}

Here, ``algebraically generated group'' is used in the sense of
\cite{KZ}, rather than in the sense of \cite{CKRVS}.
We use the notation of Corollary~\ref{cor:algebraic}.
For $m\geq1$ and $i_1,\ldots,i_m\in I$, we consider the multiplication maps
\begin{equation}\label{eq:mult}
 \mu_{i_1,\ldots,i_m}:H_{i_1}\times\cdots\times H_{i_m}\longrightarrow G,
 \qquad (h_1,\ldots,h_m)\longmapsto h_1\cdots h_m.
\end{equation}
The group $H$ is the union of their images. These images are connected
and contain $e$, so $H$ is connected.

\begin{proof}[Proof of Corollary~\ref{cor:algebraic}]
For every $i$, the subalgebra $\h$ is stable under
$\ad(\Lie H_i)$. Since $\Ad$ is regular and $H_i$ is connected,
$\h$ is $\Ad(H_i)$-stable by \cite[Corollary~7.2.7]{FK}.
Thus $\h$ is $\Ad(H)$-stable.
Every element of $H_i$ is joined to $e$ by
a smooth curve in $H_i$. The translated tangent vectors of that curve
lie in $\Lie H_i\subseteq\h$. Thus $H_i\subseteq H(\h)$
for every $i$, and $H(\h)=H$. Corollary~\ref{cor:transfer} gives
$\dl(H)\leq\dl(\h)$ when $\h$ is solvable, and
$c(H)\leq c(\h)$ when $\h$ is nilpotent.

For the converse and the reverse bounds, we set $K:=\overline H\subseteq G$.
Since $H$ is connected, so is $K$. Since $H_i\subseteq K$ for every $i$,
we have $\h\subseteq\Lie K$.
Group commutator identities pass to the closure. Thus, if $H$ is
solvable, then $K$ is solvable with $\dl(K)=\dl(H)$.
Theorem~\ref{thm:main} now gives
\[
 \dl(\h)\leq\dl(\Lie K)=\dl(K)=\dl(H).
\]
Likewise, if $H$ is nilpotent, then $K$ is nilpotent with $c(K)=c(H)$,
and the same theorem gives
\[
 c(\h)\leq c(\Lie K)=c(K)=c(H).
\]
Together with the preceding bounds, these prove both equivalences and
equalities.
\end{proof}

\begin{proof}[Proof of Corollary~\ref{cor:application-locally-finite}]
We first suppose that $\h_i=\Lie H_i$ for connected algebraic subgroups
$H_i\subseteq\Aut(X)$, and set $H:=\langle H_1,\ldots,H_n\rangle$.
The algebraic family $\mu_{1,\ldots,n}$ in \eqref{eq:mult}
has an irreducible source $H_{i_1}\times\cdots\times H_{i_m}$;
its image contains $\operatorname{id}_X$ and generates $H$.
Thus $H$ satisfies the hypothesis of Theorem~A in \cite{CKRVS}.
Corollary~\ref{cor:algebraic} and that theorem give
\begin{equation}\label{eq:algebraicity}
 \h\text{ solvable}
 \quad\Longrightarrow\quad
 H\text{ solvable}
 \quad\Longrightarrow\quad
 H\text{ is an affine algebraic group}.
\end{equation}
Now \cite[Theorem~2.5.1]{KZ} yields $\h=\Lie H$.
In particular, $\h$ is finite dimensional and acts locally finitely on
$\mathcal O(X)$.

Let us now reduce the general case to the previous one using
\cite[Corollary~2.13]{ZLF}.
By that result, $\h$ is contained in a solvable Lie algebra
$\mathfrak h_{\max}$ generated by Lie algebras of algebraic groups.
Each $\h_i$ is finite dimensional. Indeed, we choose a finite-dimensional
$\h_i$-invariant subspace of $\mathcal O(X)$ containing a finite set of
algebra generators. Restriction to that subspace is injective, since a
derivation vanishing on the generators is zero.

We choose bases for the $\h_i$. Each basis vector is a finite linear
combination of iterated brackets of elements of the Lie algebras
generating $\mathfrak h_{\max}$, so only finitely many of these Lie
algebras occur. Since there are finitely many basis vectors in total,
we can choose finitely many connected
algebraic subgroups $K_1,\ldots,K_m\subseteq\Aut(X)$ such that
\[
 \h\subseteq
 \mathfrak l:=\langle\Lie K_1,\ldots,\Lie K_m\rangle_{\mathrm{Lie}}
 \subseteq\mathfrak h_{\max}.
\]
The Lie algebra $\mathfrak l$ is solvable. By the first part of the proof,
it is locally finite and finite dimensional, and the same holds for its
subalgebra $\h$.
\end{proof}

\end{document}